\documentclass[11pt]{amsart}
\usepackage[T1]{fontenc}
\usepackage[utf8]{inputenc}
\usepackage{lmodern}
\usepackage{tikz}
\usetikzlibrary{positioning,arrows.meta}
\usepackage{graphicx}
\newtheorem*{introcorollary}{Corollary}
\usepackage{amsmath,amssymb,amsthm,mathtools}
\usepackage{microtype}
\usepackage[hidelinks]{hyperref}
\hypersetup{
  pdftitle={Determinant characters and multiplicative solubility of finite skew braces},
  pdfauthor={Marco Damele},
  pdfsubject={Solubility of finite skew braces with nilpotent additive derived subgroup},
  pdfkeywords={skew braces, soluble groups, nilpotent derived subgroup, Frattini argument}
}

\newtheorem{maintheorem}{Theorem}

\newtheorem{theorem}{Theorem}[section]
\newtheorem{lemma}[theorem]{Lemma}
\newtheorem{proposition}[theorem]{Proposition}
\newtheorem{corollary}[theorem]{Corollary}
\theoremstyle{definition}
\newtheorem{definition}[theorem]{Definition}
\theoremstyle{remark}

\DeclareMathOperator{\Aut}{Aut}

\DeclareMathOperator{\GL}{GL}

\DeclareMathOperator{\Syl}{Syl}

\DeclareMathOperator{\Ad}{Ad}
\newcommand{\F}{\mathbf F}
\newcommand{\N}{\mathrm N}

\numberwithin{equation}{section}
\allowdisplaybreaks[1]
\title[Solubility of finite skew braces]{Determinant characters and multiplicative solubility of finite skew braces}
\author{Marco Damele}
\address{Department of Mathematics and Computer Science, University of Cagliari, Italy}
\email{marco.damele@unica.it}
\subjclass[2020]{Primary 16T25; Secondary 20D10, 20D15, 20B35}
\keywords{Finite skew brace, soluble group, nilpotent derived subgroup, supersoluble group, Frattini argument, holomorph}
\date{}

\begin{document}

\begin{abstract}

Let $B$ be a finite skew brace. We prove that if the
additive derived subgroup $(B,+)'$ is nilpotent, then
the multiplicative group $(B,\cdot)$ is soluble.
In particular, this holds for finite skew braces
with supersoluble or metabelian additive group.
\end{abstract}

\maketitle

\section{Introduction}\label{sec:introduction}

A \emph{skew brace} is a triple $(B,+,\cdot)$ such that
$(B,+)$ and $(B,\cdot)$ are groups and for every $a,b,c \in B$:
\[
 a\cdot(b+c)=(a\cdot b)-a+(a\cdot c)
\]
The two groups are called the \emph{additive} and
\emph{multiplicative} groups of $B$, respectively.
Introduced by Guarnieri and Vendramin~\cite{GV} as a
generalisation of Rump's braces~\cite{Rump}, skew braces
arose in the study of set-theoretic solutions of the
Yang--Baxter equation. They also connect this subject
with regular subgroups of holomorphs and Hopf--Galois
theory~\cite[Appendix~A]{SV}.
Throughout this paper, all groups and skew braces are finite. A central problem is to understand how the structure
of either group constrains the other.
This has been studied through solubility
criteria~\cite{Byott2015,GN,Nasybullov,TQ}, restrictions
involving simple groups~\cite{Byott2024,TsangSimple,DameleST},
and comparisons of Sylow structure and derived
subgroups~\cite{DameleZ,DameleSimpleZ,LNZ}.
When the additive group is abelian, the multiplicative
group is soluble, as follows from the work of Etingof,
Schedler and Soloviev~\cite{ESS}.
Byott~\cite[Theorem~1]{Byott2015} proved the corresponding
result for nilpotent additive groups in the language
of Hopf--Galois theory; it was subsequently obtained
for skew braces by Smoktunowicz and
Vendramin~\cite[Corollary~2.23]{SV}.
The natural extension to soluble additive groups was
proposed by Byott~\cite{Byott2015} and formulated for
skew braces in~\cite[Question~2.25]{SV} and~\cite[Problem~2.46]{Vendramin}.
Known as \emph{Byott's conjecture}, it remained open
for more than a decade.
Di Matteo, Ferrara and Trombetti~\cite{DFT} recently
disproved it by constructing a skew brace with soluble
additive group and insoluble multiplicative group.
Our main result shows that nilpotency of the additive
derived subgroup is sufficient.

\begin{maintheorem}\label{thm:A}
Let $B$ be a finite skew brace.
If $(B,+)'$ is nilpotent, then $(B,\cdot)$ is soluble.
\end{maintheorem}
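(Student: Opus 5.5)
We argue by induction on $|B|$, with $B$ a minimal counterexample. Put $A=(B,+)$ and $G=(B,\cdot)$. The standard translation is through the holomorph. The map $\lambda\colon G\to\Aut(A)$, $a\mapsto\lambda_a$ with $\lambda_a(b)=-a+ab$, is a homomorphism. The group $\{(a,\lambda_a)\}$ is a regular subgroup of $\mathrm{Hol}(A)=A\rtimes\Aut(A)$ isomorphic to $G$. The multiplicative kernel $\ker\lambda$ is a normal subgroup of $G$, and on it $ab=a+b$, so it is isomorphic to a subgroup of $A$. Since $A'$ is nilpotent, $A$ is soluble, and hence $\ker\lambda$ is soluble. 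So it suffices to prove that $\lambda(G)\le\Aut(A)$ is soluble.

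The idea is to exploit the characteristic subgroup $N=A'$, together with the sections it produces. First, $N$ is characteristic in $A$ and is invariant under every $\lambda_a$. Whether $N$ is a left ideal or a strong left ideal depends on normality in $G$, and we need to check this. Let $\Phi=\Phi(N)$, which is characteristic in $A$. The quotient $V=N/\Phi$ is a direct product of elementary abelian groups $\F_p^{n_p}$, so $\lambda(G)$ acts on each $\F_p^{n_p}$ linearly. Also $A/N$ is abelian, so $\lambda(G)$ acts on $A/N$ through $\Aut(A/N)$. Let $K$ be the kernel of the action of $\lambda(G)$ on $A/N\times N/\Phi$. A standard argument shows that automorphisms of $A$ acting trivially on $A/N$ and on $N/\Phi(N)$, with $N$ nilpotent, form a soluble group (in fact nilpotent modulo something acting faithfully on abelian sections). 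Concretely, such an automorphism centralises a normal series whose factors are abelian once refined through $\Phi$. So $K$ is soluble, and it remains to show that the image of $G$ in $\Aut(A/N)\times\prod_p \GL_{n_p}(\F_p)$ is soluble.

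For this image we use the quotient braces. If $N$ (or a suitable characteristic subgroup) is an ideal, then $B/N$ is a skew brace with abelian additive group, so its multiplicative group is soluble by Etingof--Schedler--Soloviev, and induction handles the rest. The difficulty is that $A'$ need not be an ideal of $B$. Here the title suggests the key tool: determinant characters. For each $p$, compose $G\to\GL_{n_p}(\F_p)\xrightarrow{\det}\F_p^\times$. A Frattini argument on a Sylow subgroup of $N$, which is characteristic and hence $\lambda$-invariant, should give the following. Either some proper nonzero subgroup yields a nontrivial ideal (and induction applies to the ideal and the quotient), or the minimal counterexample forces $G$ to act irreducibly with perfect image. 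One then shows that a perfect image cannot occur: the regularity of $G$ on $A$ forces the orbit sizes $|G|=|A|$. Combined with the action on $N/\Phi$ factoring through something with trivial determinant, this contradicts a counting or fixed-point argument. In particular, for a perfect subgroup of $\mathrm{Hol}(A)$ acting regularly, the induced action on $A/\Phi(A')$ should have soluble image.

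The main obstacle is this last step: ruling out that the image of $G$ in $\prod_p\GL_{n_p}(\F_p)$ has a nonabelian composition factor. I would first try to reduce to the case where $B$ is a minimal counterexample with $\ker\lambda=1$ and $A'$ a minimal characteristic elementary abelian $p$-subgroup. In that case $\mathrm{Hol}(A)$ is explicit enough to use the Frattini argument, $G=N_G(P)\cdot(\text{soluble})$, together with the determinant character, to force the insoluble part into a proper sub-skew brace and contradict minimality.
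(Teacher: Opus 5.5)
\textbf{There is a genuine gap: the step you leave open is the whole theorem.}

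Your preliminary reductions are correct: $\ker\lambda$ is soluble, so it suffices to treat $\lambda(G)$, and the kernel $K$ of the action on $A/N$ and $N/\Phi(N)$ is soluble. But since $K\le\lambda(G)$, the remaining claim, that the image of $\lambda(G)$ in $\Aut(A/N)\times\prod_p\GL_{n_p}(\F_p)$ is soluble, is equivalent to the theorem. These reductions also use no brace structure. For any soluble $A$, a characteristic series with elementary abelian factors has a nilpotent stability group, so the same reduction is available. The Di Matteo--Ferrara--Trombetti counterexample shows that the residual claim fails in that generality. So whatever settles it must use both regularity and the nilpotency of $A'$ in an essential way.

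Your sketch of that step contains no argument:
\begin{itemize}
\item ``Regularity forces $|G|=|A|$'' is a tautology, and the ``counting or fixed-point argument'' is never specified.
\item The proposed reduction to $A'$ being a minimal characteristic elementary abelian subgroup is not available. Characteristic subgroups of $(B,+)$ are left ideals but not ideals in general, so you cannot pass to quotient braces.
\item The determinant character you propose, $G\to\GL_{n_p}(\F_p)\xrightarrow{\det}\F_p^\times$ via the additive action on $N/\Phi(N)$, is automatically trivial on every perfect subgroup. It can therefore never by itself produce a contradiction.
\end{itemize}

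\textbf{What is missing.} You need (i) a concrete handle on the insoluble part, and (ii) a second, \emph{multiplicative} evaluation of the same character showing that it is nontrivial on a perfect section.

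For (i), the paper uses Byott's description of a minimal counterexample. It gives $(B,\cdot)/R\simeq\GL_3(2)^r\rtimes T$ with $R$ soluble, and every maximal normal subgroup of $(B,+)$ has index $2$. Hence $(B,+)/(B,+)'$ is a $2$-group. This is exactly where nilpotency of $(B,+)'$ enters: the additive Sylow $3$- and $7$-subgroups $Q,P$ lie in $(B,+)'$, so they are characteristic in $(B,+)$ and hence subbraces.

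For (ii), the paper proceeds in four steps.
\begin{itemize}
\item For $t\in\N_{(B,\cdot)}((P,\cdot))\cap(Q,\cdot)$ one has $\lambda_t|_P=\Ad_t|_P$, and this is a brace automorphism of the $7$-brace $P$.
\item The iterated-Frattini determinant satisfies $\Delta_{(P,+)}=\Delta_{(P,\cdot)}$ on brace automorphisms, by left nilpotency of $p$-braces.
\item After dividing by the radical term $\beta_{R,7}$ and squaring (using $7-1=3\cdot 2$), a Frattini argument makes the additive character descend to $(B,\cdot)/R$. There it equals $\Delta(\Ad_z)^2$ on the normaliser of a Sylow $7$-subgroup.
\item An order-$3$ element $z\in\GL_3(2)^r$, acting as the Frobenius on the factors in one orbit of a Sylow $7$-subgroup of $T$, has $\Delta=2$. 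This gives $1=\overline\psi(z)=4$ in $\F_7^\times$, contradicting perfectness of $\GL_3(2)^r$.
\end{itemize}

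None of these ingredients appears in your plan. You do not use Byott's reduction or the normal additive Sylow $3$- and $7$-subgroups. You do not use the identity $\lambda_t=\Ad_t$ on the normaliser, the equality of additive and multiplicative determinants, or the descent to $(B,\cdot)/R$. As it stands, the proposal does not prove the statement.
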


In particular, this extends the result for cyclic
additive derived subgroups in~\cite{LNZ}.
The proof uses Byott's description of a minimal
counterexample~\cite[Theorem~3.2]{Byott2024}.
Its multiplicative group has a soluble normal subgroup
$R$ with
\[
 (B,\cdot)/R\simeq\GL_3(2)^r\rtimes T,
\]
where $r\geq1$ and $T\leq S_r$ permutes the factors.
We attach to this quotient a character
$ \overline\psi:(B,\cdot)/R\longrightarrow\ \F_7^\times$.
For a suitable element $z\in\GL_3(2)^r$, perfectness
forces $\overline\psi(z)=1$, whereas a determinant
calculation gives $\overline\psi(z)=4$.
This contradiction excludes the minimal counterexample. Theorem~A applies in particular to supersoluble groups,
a class lying strictly between nilpotent and soluble
groups. Their derived subgroups are
nilpotent~\cite[Theorem~5.4.10]{Robinson}.
It also applies to metabelian groups, whose derived
subgroups are abelian.

\begin{introcorollary}
Let $B$ be a finite skew brace with supersoluble additive
group. Then its multiplicative group is soluble.
\end{introcorollary}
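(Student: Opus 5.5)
The plan is to deduce the Corollary directly from Theorem~\ref{thm:A}, so that the only thing left to check is its hypothesis. Let $B$ be a finite skew brace whose additive group $(B,+)$ is supersoluble. Theorem~\ref{thm:A} states that $(B,\cdot)$ is soluble as soon as $(B,+)'$ is nilpotent. It therefore suffices to show that the derived subgroup of a finite supersoluble group is nilpotent.

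This is the classical result cited in the introduction,~\cite[Theorem~5.4.10]{Robinson}. For completeness, I would recall the standard argument. A finite supersoluble group $G$ has a chief series
\[
1=G_0<G_1<\dots<G_n=G
\]
with every factor $G_i/G_{i-1}$ cyclic of prime order. Conjugation induces a homomorphism $G\to\Aut(G_i/G_{i-1})$, and the target is abelian. Hence $G'$ acts trivially on every factor, so $[G_i\cap G',G']\le G_{i-1}\cap G'$ for each $i$. The series $(G_i\cap G')_i$ is then a central series of $G'$, and $G'$ is nilpotent.

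Applying this with $G=(B,+)$ shows that $(B,+)'$ is nilpotent, and Theorem~\ref{thm:A} yields that $(B,\cdot)$ is soluble. None of the steps poses a real obstacle, since all the depth lies in Theorem~\ref{thm:A}. The only point needing care is that the hypothesis of Theorem~\ref{thm:A} concerns the additive group alone, with no compatibility condition involving the multiplication, and supersolubility of $(B,+)$ is a purely group-theoretic property of that same group.

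The same two-line argument gives the metabelian case mentioned in the introduction. If $(B,+)$ is metabelian, then $(B,+)'$ is abelian, hence nilpotent, and Theorem~\ref{thm:A} applies again.
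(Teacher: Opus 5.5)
Your proposal is correct and matches the paper's proof. Both deduce the corollary from Theorem~\ref{thm:A} using the fact that the derived subgroup of a finite supersoluble group is nilpotent (Robinson, Theorem~5.4.10), and your sketch of that fact, letting $G'$ act trivially on the prime-order chief factors, is also sound.
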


\begin{introcorollary}
Let $B$ be a finite skew brace with metabelian additive
group. Then its multiplicative group is soluble.
\end{introcorollary}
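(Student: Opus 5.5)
The statement just given is the second Corollary: a finite skew brace with metabelian additive group has soluble multiplicative group. I will deduce it directly from Theorem~A.

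Let $B$ be a finite skew brace with $(B,+)$ metabelian. By definition of metabelian, the derived subgroup $(B,+)'$ is abelian. Every abelian group is nilpotent, of class at most $1$, so $(B,+)'$ is nilpotent.

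The hypothesis of Theorem~A is therefore satisfied. Theorem~A then gives that $(B,\cdot)$ is soluble, which is the claim.

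No step here is an obstacle. The only thing to check is that ``metabelian'' is read with the standard meaning, namely that $G'$ is abelian, or equivalently $G''=1$. With that reading the nilpotency of $(B,+)'$ is immediate.

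The same one-line reduction handles the first Corollary. For a supersoluble group $G$, the derived subgroup $G'$ is nilpotent by \cite[Theorem~5.4.10]{Robinson}, so Theorem~A applies again. All the substantive work is in Theorem~A itself.
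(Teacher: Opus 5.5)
Your proposal is correct and matches the paper's proof exactly: since $(B,+)$ is metabelian, $(B,+)'$ is abelian and hence nilpotent, so Theorem~A applies directly. Your remark on the supersoluble case also matches the paper, which uses nilpotency of the derived subgroup of a finite supersoluble group in the same way.
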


Theorem \ref{thm:A} applies to additive groups that are neither
supersoluble nor metabelian. For instance, consider $G=\mathrm{SL}_2(3)\cong Q_8\rtimes C_3.$
Its derived subgroup is isomorphic to the quaternion
group $Q_8$, which is nilpotent but non-abelian.
Moreover, $G$ is not supersoluble, since
$G/Z(G)\cong A_4$ is not supersoluble.
Thus $G$ satisfies the hypothesis of Theorem \ref{thm:A}
without satisfying either of the two conditions
in the preceding corollaries.

The correspondence between skew braces and
regular subgroups of holomorphs translates Theorem~A
into a restriction on Hopf--Galois types: a Galois
extension with group $G$ admits a structure of type
$N$ if and only if there is a skew brace with additive
group $N$ and multiplicative group
$G$~\cite{GP,Byott1996,SV}.

\begin{introcorollary}
Let $L/K$ be a finite Galois extension admitting a
Hopf--Galois structure of type $N$.
If $N'$ is nilpotent, then $\operatorname{Gal}(L/K)$
is soluble. In particular, this holds when $N$ is
supersoluble or metabelian.
\end{introcorollary}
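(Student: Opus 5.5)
The plan is to take a counterexample $B$ of minimal order and use Byott's description of minimal counterexamples~\cite[Theorem~3.2]{Byott2024}. Its multiplicative group then has a soluble normal subgroup $R$ with $(B,\cdot)/R\simeq \GL_3(2)^r\rtimes T$, where $r\ge1$ and $T\le S_r$ permutes the factors. Byott's analysis should also give the additive structure: $(B,+)$ has a characteristic elementary abelian section $V\simeq \F_2^{3r}$ (a direct sum of $r$ natural $\GL_3(2)$-modules) on which the $\lambda$-action of $(B,\cdot)$ induces the quotient above. First I will check that the hypothesis ``$(B,+)'$ nilpotent'' passes to the quotients and substructures that Byott's minimality argument uses. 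It does, because derived subgroups of quotients are images of derived subgroups and subgroups of nilpotent groups are nilpotent. So the minimal counterexample for our class is a minimal counterexample in Byott's sense, and we may assume it has his shape.

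The key construction is a homomorphism $\overline\psi:(B,\cdot)/R\to\F_7^\times$. My plan is to exploit the nilpotent normal subgroup $N=(B,+)'$. Since $(B,+)/N$ is abelian, the brace structure restricted to the abelianisation should be abelian-like. The insoluble action must therefore be detected inside $N$, and because $N$ is nilpotent it splits into its Sylow subgroups. The natural candidate is a $7$-part. The Sylow $7$-subgroup of $\GL_3(2)$ is a Singer cycle of order $7$, whose normaliser has order $21$. I would use a Frattini argument: take a Sylow $7$-subgroup $P$ of the relevant additive piece, note that $(B,\cdot)=R\cdot \N(P)$ modulo the soluble radical, and let $\overline\psi$ record the determinant of the induced action of $\N(P)$ on an $\F_7$-space. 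This could be, for example, a chief factor of $N$ or $P/\Phi(P)$ viewed as an $\F_7$-module. Taking determinants gives a linear character into $\F_7^\times$, and it factors through $(B,\cdot)/R$ once one checks that $R$ acts with determinant $1$, or by adjusting $R$ to be the kernel.

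Next I will choose $z\in \GL_3(2)^r$. The natural choice is an element acting diagonally on the factors, such as an element of order $3$ normalising a Singer cycle in each coordinate. Since $\GL_3(2)^r$ is perfect, every character into an abelian group vanishes on it, so $\overline\psi(z)=1$. On the other hand, $z$ acts on the $7$-cycle by the Frobenius map $x\mapsto x^2$. An explicit eigenvalue count then gives $\det=2^{k}$ for a suitable multiplicity $k$, and I expect $k$ can be arranged so that the determinant is $4\neq1$ in $\F_7^\times$. This contradiction rules out the minimal counterexample and proves Theorem~A.

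The main obstacle is constructing $\overline\psi$ rigorously. One difficulty is showing that the determinant character is well defined on $(B,\cdot)/R$, i.e.\ trivial on $R$. The other is showing that $\GL_3(2)^r$ really acts on the chosen $\F_7$-module in a way that lets $z$ be computed. I would first try to make the relevant $7$-structure appear through the holomorph: the multiplicative group embeds as a regular subgroup of $\mathrm{Hol}(B,+)$. Nilpotency of $(B,+)'$ should then force the image of $(B,\cdot)$ in $\Aut$ of the nilpotent layers to act through a semidirect product whose $7$-local structure is controlled. Byott's theorem would then pin down the multiplicities needed for the determinant computation.
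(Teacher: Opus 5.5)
Your proposal does not prove the statement it was given. The corollary is about a Galois extension $L/K$ admitting a Hopf--Galois structure of type $N$, and the paper's proof consists exactly of the step you skip. By the Greither--Pareigis correspondence combined with Byott's translation, such a structure exists if and only if there is a finite skew brace $B$ with $(B,+)\simeq N$ and $(B,\cdot)\simeq\operatorname{Gal}(L/K)$. Theorem~A applied to $B$ then finishes the argument. Your first sentence already assumes a skew brace, so this translation is missing. The ``in particular'' clause is also missing: it needs that the derived subgroup of a finite supersoluble group is nilpotent (Robinson, Theorem~5.4.10), and that a metabelian group has abelian derived subgroup. Theorem~A is proved earlier in the paper, so it should simply be invoked rather than reproved.

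Read instead as a proof of Theorem~A, your sketch has genuine gaps, and the main one is the one you flag yourself: $\overline\psi$ is never constructed. The contradiction cannot come from $(B,\cdot)$ alone, since every character of $\GL_3(2)^r\rtimes T$ is trivial on $z$. What the brace must supply is a character whose value on the normalizer of a Sylow $7$-subgroup of the quotient is computable, namely $\overline\psi(z)=\Delta_{(P,\cdot)R/R}(\Ad_z)^2$. A determinant of $\N(P)$ acting on some $\F_7$-space is only a character of $\N(P)$. Moreover, ``adjusting $R$ to be the kernel'' destroys the identification $(B,\cdot)/R\simeq\GL_3(2)^r\rtimes T$. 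The missing ideas are these.

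(i) Every maximal normal subgroup of $(B,+)$ has index $2$, so $(B,+)/(B,+)'$ is a $2$-group. Hence the additive Sylow $7$- and $3$-subgroups $P,Q$ lie in the nilpotent group $(B,+)'$ and are characteristic in $(B,+)$. Your posited section $\F_2^{3r}$ plays no role.

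(ii) The map $\chi_{R,7}(b)=\Delta_{(P,+)}(\lambda_b|_P)\,\beta_{R,7}(b)^{-1}$, with $\Delta$ the product of determinants on the iterated Frattini factors, is a character of all of $(B,\cdot)$.

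(iii) For $t\in\N_{(B,\cdot)}((P,\cdot))\cap(Q,\cdot)$ one has $\lambda_t|_P=\Ad_t|_P$, a brace automorphism of $P$. Since $\Delta_{(P,+)}=\Delta_{(P,\cdot)}$ on brace automorphisms of a $p$-brace, this converts the $\lambda$-action into multiplicative conjugation.

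(iv) Only the $3$-part $\chi_{R,7}^2$ descends to $(B,\cdot)/R$. This is a Frattini argument using that these elements $t$ form a Sylow $3$-subgroup of the normalizer.

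Finally, your choice of $z$ fails in general. It must normalize a whole Sylow $7$-subgroup $U^r\rtimes V$ of $\GL_3(2)^r\rtimes T$. Putting the Frobenius $s$ in every coordinate gives $\Delta=2^r$, which equals $1$ when $3\mid r$. The paper instead puts $s$ on a single $V$-orbit, of size $7^k\equiv1\pmod 3$. Then $z$ centralizes $V$, $\Delta=2$, and $\overline\psi(z)=2^2=4\neq1$.
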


The paper is organised as follows.
Section~\ref{sec:prelim} collects the preliminaries,
reduces Theorem~\ref{thm:A} to the main obstruction,
and establishes its principal consequences.
Section~\ref{sec:characters} introduces determinant
characters, while Section~\ref{sec:normal-sylows}
uses normal additive Sylow subgroups to construct
characters on multiplicative quotients.
Finally, Section~\ref{sec:obstruction} proves the
obstruction and completes the proof of Theorem~\ref{thm:A}.

\section{Preliminaries and reduction}\label{sec:prelim}

\subsection{Skew braces}
A \emph{skew brace} is a triple $(B,+,\cdot)$ such that both
$(B,+)$ and $(B,\cdot)$ are groups and for all $a,b,c \in B$ the following compatibility condition holds:
\[
 a\cdot(b+c)=(a\cdot b)-a+(a\cdot c)
\]
The map $\lambda:(B,\cdot)\to\Aut(B,+)$
sending $a \in B$ to $\lambda_a$ where
\[
 \lambda_a(b)=-a+(a\cdot b)
\]
is a well defined group homomorphism (see \cite{GV}).
A \emph{subbrace} of $(B,+,\cdot)$ is a subset $C \subset B$ such that $(C,+) \le (B,+)$ and $(C,\cdot) \le (B,\cdot)$. If $(C,+)$ is a characteristic subgroup of $(B,+)$, then $C$ is a subbrace of $B$ since for every $x,y \in C$ $x \cdot y=x+\lambda_x(y) \in C$. A subbrace $L$ of $B$ is called a \emph{left ideal}
if $ \lambda_b(L)\subseteq L$ for every $b \in B$;
see \cite{CSV}. A left ideal $I$ of $B$ is called an \emph{ideal}, and we write $I \trianglelefteq B$, if $ (I,+)\trianglelefteq(B,+),  (I,\cdot)\trianglelefteq(B,\cdot)$;
see \cite[Definition~5.1 and Lemma~5.5]{SV}. If $I$ is an ideal, the additive and multiplicative
cosets coincide, since
\[
 b\cdot I=b+\lambda_b(I)=b+I
 \qquad(b\in B).
\]
Thus the quotient set $ B/I=\{b+I:b\in B\}$
inherits the operations
\[
 \begin{aligned}
 (a+I)+(b+I)&=(a+b)+I,\\
 (a+I)\cdot(b+I)&=(a\cdot b)+I.
 \end{aligned}
\]
These operations are well defined and make $B/I$
a skew brace, called the \emph{quotient skew brace}
of $B$ by $I$.
A map $\alpha:B \longrightarrow B$ is called an automorphism of the skew brace $B$ if $\alpha \in \operatorname{Aut}(B,+) \cap \operatorname{Aut}(B,\cdot)$.
For a group $G$, write $G'$ for its derived subgroup, $\Phi(G)$ for
its Frattini subgroup, and $\N_G(H)$ for the normalizer of a subgroup
$H$. For $x,y \in G$ we put $\Ad_x(y)=xyx^{-1}.$

\subsection{Byott's reduction}

As recalled in the introduction, Byott's conjecture
asks whether every finite skew brace with soluble
additive group has soluble multiplicative group.
Although the conjecture is false~\cite{DFT},
Byott's description of a minimal counterexample
provides the starting point of our proof.
We state his reduction in the language of skew braces.

\begin{theorem}[{\cite[Theorem~3.2]{Byott2024}}]
\label{thm:Byott}
Let $B$ be a finite skew brace such that
$(B,+)$ is soluble and $(B,\cdot)$ is insoluble.
Suppose that every proper subbrace of $B$ has
soluble multiplicative group.
Then every maximal normal subgroup of $(B,+)$
has index $2$, and there exist a soluble normal
subgroup $R$ of $(B,\cdot)$, an integer $r\geq1$,
and a soluble transitive subgroup $T\leq S_r$
such that
\[
 (B,\cdot)/R\simeq\GL_3(2)^r\rtimes T,
\]
where $T$ acts by permuting the $r$ factors.
\end{theorem}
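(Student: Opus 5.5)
The plan follows Byott's argument in \cite{Byott2024}: study a minimal counterexample through its characteristic additive subgroups, which are automatically subbraces. The main external input is Guralnick's classification of nonabelian simple groups having a subgroup of prime-power index.

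\textbf{Step 1: proper characteristic subgroups give soluble subgroups of prime-power index.} Let $K$ be a proper characteristic subgroup of $(B,+)$. By the remark in the preliminaries, $K$ is a subbrace. Since each $\lambda_b$ is an additive automorphism, $K$ is also $\lambda$-invariant, so it is a left ideal. By minimality, $(K,\cdot)$ is soluble. For a prime $p$ dividing $|(B,+)/(B,+)'|$, apply this to
\[
K_p=\bigcap\{N\trianglelefteq(B,+) : |B:N|=p\},
\]
which is characteristic and proper because $(B,+)$ is soluble. Since $b\cdot K_p=b+\lambda_b(K_p)=b+K_p$, we get $|(B,\cdot):(K_p,\cdot)|=|(B,+):K_p|=p^{a}$. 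So $(B,\cdot)$ has a soluble subgroup of $p$-power index for every such prime $p$. In addition, $\lambda$ induces a homomorphism from $(B,\cdot)$ to $\GL_a(p)$, acting on the elementary abelian quotient $(B,+)/K_p$.

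\textbf{Step 2: identify the nonabelian composition factors.} Let $S$ be a nonabelian composition factor of $(B,\cdot)$. A soluble subgroup $H$ of $p$-power index in $(B,\cdot)$ passes to sections, so $S$ has a soluble subgroup of $p$-power index, and this index is nontrivial because $S$ is not soluble. Guralnick's theorem then restricts $S$ to the following list, each with a subgroup of prime-power index:
\[
A_n,\quad \mathrm{PSL}_n(q),\quad \mathrm{PSL}_2(11),\quad M_{11},\quad M_{23},\quad \mathrm{PSU}_4(2).
\]
Requiring that subgroup to be soluble leaves essentially $A_5$, $\mathrm{PSL}_2(7)\cong\GL_3(2)$, and small $\mathrm{PSL}_2(q)$ with Borel subgroups of index $q+1$.

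\textbf{Step 3: force every maximal normal subgroup of $(B,+)$ to have index $2$.} Suppose some $p$ works in Step 1. Use the additive Sylow and Hall structure of $(B,+)$: Hall subgroups for complementary sets of primes are conjugates of characteristic ones only up to conjugacy, so the plan is to pass to characteristic Hall-type subgroups where possible. This yields soluble multiplicative subgroups whose indices are coprime. A factorisation $S=H_1H_2$ with $H_1,H_2$ soluble of coprime prime-power indices pins $S$ down further. Eliminating $A_5$ and the other candidates leaves only indices $7$ and $8$ in $\GL_3(2)$. Tracking which prime actually occurs forces all maximal normal subgroups of $(B,+)$ to have index $2$, and hence $p=2$ throughout.

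\textbf{Step 4: assemble the structure.} Let $R$ be the soluble radical of $(B,\cdot)$. The socle of $(B,\cdot)/R$ is a direct product of nonabelian simple groups, and by Step 2 and Step 3 every factor is $\GL_3(2)$, giving $\GL_3(2)^r$. Minimality again (for instance, a proper subbrace built from a stabiliser) should force $(B,\cdot)/R$ to embed in $\Aut(\GL_3(2)^r)=\Aut(\GL_3(2))\wr S_r$. It should also force the top group $T$ to act transitively on the factors and to contribute no outer automorphisms, leaving $(B,\cdot)/R\simeq\GL_3(2)^r\rtimes T$. Finally, $T$ is soluble because it is a section of the soluble subgroup $K_2$.

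\textbf{Main obstacle.} Steps 3 and 4 are where I expect the real difficulty. One must rule out mixed primes and outer (graph) automorphisms of $\GL_3(2)$, and show transitivity of $T$. My first attempt would be a case analysis on $\GL_3(2)$ versus its inverse-transpose automorphism, using the two classes of index-$7$ subgroups.
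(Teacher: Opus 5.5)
The paper does not prove this statement. It quotes it from Byott~\cite[Theorem~3.2]{Byott2024} and uses it as a black box, so your proposal has to stand on its own, and as written it has a genuine gap. Step~1 is sound. Step~2 is sound as far as it goes: a subgroup of $p$-power index passes to subnormal sections, not to arbitrary ones, but that is all you need for composition factors. Your list of survivors is too short, though: it must also contain $\mathrm{PSL}_3(3)$, whose parabolic $3^2{:}\GL_2(3)$ is soluble of index $13$, and every $\mathrm{PSL}_2(q)$ with $q+1$ a prime power.

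The real failure is Step~3. Hall subgroups of $(B,+)$ are not characteristic, hence not $\lambda$-invariant, so they give neither subbraces nor subgroups of $(B,\cdot)$. The only primes for which you actually obtain soluble subgroups of prime-power index are those dividing $|(B,+)/(B,+)'|$. Your coprime-factorisation argument therefore needs two such primes, and even then it only yields $\{p,q\}=\{2,7\}$ with $S\simeq\GL_3(2)$, not $p=2$.

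The conclusion of the theorem, that every maximal normal subgroup has index $2$, says precisely that $(B,+)/(B,+)'$ is a $p$-group for a single prime. In that regime your method eliminates nothing. Each of the following genuinely has a soluble subgroup of the required prime-power index:
\begin{itemize}
\item $A_5$, with $A_4$ of index $5$;
\item $\mathrm{PSL}_2(8)$, with a Borel subgroup of index $9$;
\item $\GL_3(2)$, with $S_4$ of index $7$;
\item $\mathrm{PSL}_3(3)$, with the parabolic of index $13$;
\item for $p=2$, $\mathrm{PSL}_2(31)$, with a Borel subgroup of index $32$.
\end{itemize}
So ``tracking which prime actually occurs'' has no content: counting indices cannot force $p=2$ or $S\simeq\GL_3(2)$.

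What is missing is an argument that uses the brace more fully than through the index of $(K_p,\cdot)$. The natural source is the affine action. $(B,\cdot)$ acts on $V=(B,+)/K_p$ by $x+K_p\mapsto b+\lambda_b(x)+K_p$, that is, by affine maps of the $\F_p$-space $V$. This action is transitive and the stabiliser of $K_p$ is $(K_p,\cdot)$. So the kernel is soluble, and the image is an insoluble transitive subgroup of $\mathrm{AGL}(V)$ with soluble point stabiliser. Such a group cannot contain all translations, since it would then be $V$ extended by a soluble stabiliser. Write $W$ for the translations it contains and $L$ for its linear part. Transitivity with soluble stabiliser then amounts to a surjective $1$-cocycle $L\to V/W$ with soluble kernel. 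The model case is the transitive $\GL_3(2)$ on $8$ points inside $\mathrm{AGL}_3(2)$, which exists because $H^1(\GL_3(2),\F_2^3)\neq0$. It is structure of this representation-theoretic and cohomological kind that has to exclude odd $p$ and the other simple groups. You write down the linear action in Step~1 but never use it.

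Step~4 has further gaps.
\begin{itemize}
\item The embedding into $\Aut(\GL_3(2))\wr S_r$ is automatic once $R$ is the soluble radical; it needs no minimality.
\item Transitivity of $T$ and the absence of graph automorphisms are only asserted (``should force''), not proved.
\item The claim that $T$ is soluble because it is a section of $(K_2,\cdot)$ is wrong. The image of $(K_2,\cdot)$ in $T$ merely has $2$-power index there. A group with a soluble subgroup of $2$-power index need not be soluble, as $\GL_3(2)$ with its Borel subgroup of index $8$ shows.
\end{itemize}
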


\begin{corollary}\label{cor:Byott-abelianization}
Under the hypotheses of Theorem~\ref{thm:Byott},
the group $(B,+)/(B,+)'$ is a $2$-group.
\end{corollary}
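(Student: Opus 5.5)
The plan is to read the abelian quotient $(B,+)/(B,+)'$ directly off the first conclusion of Theorem~\ref{thm:Byott}, namely that every maximal normal subgroup of $(B,+)$ has index $2$. Set $A=(B,+)/(B,+)'$, a finite abelian group. Suppose, for a contradiction, that some odd prime $p$ divides $|A|$. A finite abelian group whose order is divisible by $p$ has a quotient of order $p$. Concretely, $A$ has a subgroup of index $p$: decompose $A$ into cyclic factors, pick a cyclic factor $C$ of order divisible by $p$, and take the product of the other factors with the index-$p$ subgroup of $C$.

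Let $M$ be the preimage of this subgroup in $(B,+)$. Since $M$ contains $(B,+)'$, it is normal in $(B,+)$. The quotient $(B,+)/M$ has order $p$, so $M$ is a maximal normal subgroup of index $p$. Indeed, $M$ is maximal among all subgroups, because no subgroup lies strictly between $M$ and $(B,+)$ when the index is prime. This contradicts Theorem~\ref{thm:Byott}, which forces every maximal normal subgroup to have index $2$. Hence $|A|$ has no odd prime divisor, and $A$ is a $2$-group.

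The case $A$ trivial needs no separate treatment: the trivial group is a $2$-group. It also cannot occur here, since $(B,+)$ is soluble and nontrivial, which forces $(B,+)'<(B,+)$. This nontriviality is not needed for the argument, though.

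There is no real obstacle. The only point needing care is that a subgroup of prime index containing the derived subgroup is automatically normal and maximal normal. This holds because subgroups containing $(B,+)'$ correspond to subgroups of the abelian quotient $A$, all of which are normal.
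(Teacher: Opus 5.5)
Your proof is correct and follows the paper's argument. Both take a subgroup of prime index $p$ in the abelian quotient $(B,+)/(B,+)'$, pull it back to a normal subgroup of index $p$ in $(B,+)$ (hence maximal normal), and conclude $p=2$ from Theorem~\ref{thm:Byott}; the only difference is that you phrase this as a contradiction for odd $p$.
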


\begin{proof}
Let $p$ be a prime divisor of $|(B,+)/(B,+)'|$.
Since this quotient is abelian, it has a normal
subgroup $N/(B,+)'$ of index $p$, where
$N\trianglelefteq(B,+)$.
Then $[(B,+):N]=p$, so $N$ is a maximal normal
subgroup of $(B,+)$.
Thus $p=2$ by Theorem~\ref{thm:Byott}.
\end{proof}
\subsection{Proof of Theorem~\ref{thm:A}}
We first reduce Theorem~\ref{thm:A} to the following key obstruction, which will be proved in Section~\ref{sec:obstruction}.
\begin{proposition}\label{prop:obstruction}
Let $B$ be a finite skew brace whose additive Sylow $3$- and $7$-subgroups are normal. Then $(B,\cdot)$ has no soluble normal subgroup $R$ such that
$(B,\cdot)/R\simeq\GL_3(2)^r\rtimes T,$
where $r\geq1$ and $T\leq S_r$ permutes the factors.
\end{proposition}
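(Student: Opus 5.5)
The plan is to construct a homomorphism from $(B,\cdot)$ to the abelian group $\F_7^\times$ out of the $\lambda$-action on the normal additive Sylow $7$-subgroup, and then to evaluate it on a well-chosen element $z$ in two incompatible ways. Let $P$ be the additive Sylow $7$-subgroup and $Q$ the additive Sylow $3$-subgroup. Both are normal, hence characteristic, in $(B,+)$. By the remark in the preliminaries they are subbraces, and they are invariant under every $\lambda_b$. Since $x\cdot y=x+\lambda_x(y)$, we have $|P|=|B|_7$, so $(P,\cdot)$ is a Sylow $7$-subgroup of $(B,\cdot)$; similarly $(Q,\cdot)$ is a multiplicative Sylow $3$-subgroup.

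First I would fix a characteristic series of $(P,+)$ whose factors are elementary abelian, for example the lower exponent-$7$ central series. Each factor $V_i$ is an $\F_7$-space on which $\lambda$ acts. I would then define
\[
\psi(b)=\prod_i\det\bigl(\lambda_b|_{V_i}\bigr)\in\F_7^\times ,
\]
which is a homomorphism $(B,\cdot)\to\F_7^\times$. This is the determinant character of Section~\ref{sec:characters}.

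Next, suppose a soluble $R$ exists, and let $H\le(B,\cdot)$ be the preimage of $\GL_3(2)^r$. Because $\psi(H)$ is abelian and $H/R$ is perfect, we get $\psi(H)=\psi(R)$. I would aim to show that $\psi$ is trivial on $R$, or at least on the relevant part of it. The idea is to use a Frattini argument with the normal Sylow structure: $O_7$- and $O_3$-type pieces of $R$ act on $P$ through groups whose determinants are forced, so that $\psi$ factors through $(B,\cdot)/R$ as a character $\overline\psi$. Then perfectness gives $\overline\psi(z)=1$ for every $z\in\GL_3(2)^r$. The normality of the Sylow $3$-subgroup $Q$ is what I expect to use here: it should control how elements of order $3$ in $R$ act, or ensure that the $3$-part of $\psi$ is visible.

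The second evaluation is the main obstacle. I would choose $z$ to be an element of order $3$ in the first factor that normalises a Sylow $7$-subgroup $C_7$ of $\GL_3(2)$, acting on it as $x\mapsto x^2$ (Frobenius group $7{:}3$). Lift $z$ and the $C_7$ to $(B,\cdot)$, with the $C_7$ lying inside a conjugate of $(P,\cdot)$. Using $x\cdot y=x+\lambda_x(y)$, I would compare conjugation by $z$ on $(P,\cdot)$ modulo $R\cap P$ with $\lambda_z$ on the factors $V_i$. The goal is to show that exactly one rank-one piece sees the Frobenius action, contributing eigenvalue $2$ or $4$, while the remaining pieces contribute a square-class or trivial determinant by counting fixed points of $z$. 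The expected result is $\psi(z)=4\neq1$, which is the desired contradiction.

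What I would try first is to pass to a quotient skew brace by a suitable ideal. The aim is that $P$ becomes an elementary abelian group mapping onto the $7$-part of $(B,\cdot)/R$, so that the determinant computation reduces to an explicit computation inside $7{:}3\le\GL_3(2)$.
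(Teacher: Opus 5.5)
Your outline has the same shape as the paper's argument: a determinant character built from $\lambda$ on the additive Sylow $7$-subgroup, killed on $\GL_3(2)^r$ by perfectness, and evaluated to $4$ through the Frobenius action $7{:}3$. But there is a genuine gap at the step where you hope to show that $\psi(b)=\Delta_{(P,+)}(\lambda_b|_P)$ is trivial on $R$: for this uncorrected $\psi$ that step cannot work. For $t\in R\cap\N_{(B,\cdot)}((P,\cdot))\cap(Q,\cdot)$ one gets $\psi(t)=\Delta_{(P,\cdot)\cap R}(\Ad_t)$. This is the determinant of a $3$-element of $R$ acting on the $7$-part of $R$, and nothing about an arbitrary soluble normal subgroup $R$ forces it to be $1$; your claim that these determinants are forced is unsupported.

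The same uncontrolled factor spoils your second evaluation. For a lift $z$ one has $\Delta_{(P,\cdot)}(\Ad_z)=\Delta_{(P,\cdot)\cap R}(\Ad_z)\,\Delta_{(P,\cdot)R/R}(\Ad_{zR})$, and counting fixed points does not control the first factor.

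The missing idea is a correcting character $\beta_{R,7}$ of $(B,\cdot)$. Take a $(B,\cdot)$-invariant series of $R$ with elementary abelian factors and multiply the determinants of conjugation on its $7$-factors. This is a homomorphism on all of $(B,\cdot)$, and it equals $\Delta_{(P,\cdot)\cap R}(\Ad_h)$ for every $h$ normalizing $(P,\cdot)$. One then works with the $3$-part $\bigl(\psi\,\beta_{R,7}^{-1}\bigr)^2$. A Frattini argument shows it is trivial on $R$ as soon as it is trivial on $R\cap S$ for a Sylow $3$-subgroup $S$ of $\N_{(B,\cdot)}((P,\cdot))$. A similar Sylow argument shows the induced character on $(B,\cdot)/R$ equals $\Delta_{(P,\cdot)R/R}(\Ad_z)^2$ on the whole normalizer of $(P,\cdot)R/R$, so no element-by-element lifting is needed.

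You also do not supply the bridge between $\lambda$ and conjugation. The paper takes $S=\N_{(B,\cdot)}((P,\cdot))\cap(Q,\cdot)$, which is a Sylow $3$-subgroup of the normalizer because the normalizer is a subbrace. For $t\in S$, additive commutation of $P$ and $Q$ gives $\lambda_t|_P=\Ad_t|_P$, so $\lambda_t|_P$ is a skew brace automorphism of $P$. For such automorphisms $\Delta_{(P,+)}=\Delta_{(P,\cdot)}$, by induction along the ideal $P*P$ using left nilpotency of skew braces of prime-power order. This is where normality of $Q$ is really used. It replaces your plan to quotient by a suitable ideal, which is unavailable because $R$ is only normal in $(B,\cdot)$, not an ideal.

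Finally, an element of order $3$ in the first factor need not normalize any Sylow $7$-subgroup of $\GL_3(2)^r\rtimes T$. If no Sylow $7$-subgroup of $T$ fixes the first coordinate (e.g.\ $T=C_7$ regular on $r=7$ coordinates), its commutator with a suitable element of the Sylow subgroup has a coordinate of order $3$. The paper instead places the Frobenius $s$ on a whole $V$-orbit of size $7^k$, with $V\in\Syl_7(T)$. Then $z$ centralizes $V$, normalizes $U^r\rtimes V$, and gives $\Delta=2^{7^k}=2$, hence $\overline\psi(z)=4$.
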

\begin{proof}[Proof of Theorem~\ref{thm:A}]
Suppose otherwise, and choose a counterexample $B$ of minimal order. Since $(B,+)'$ is nilpotent, $(B,+)$ is soluble. For every proper subbrace $C$ of $B$, the group $(C,+)'\leq(B,+)'$ is nilpotent, so $(C,\cdot)$ is soluble by minimality. By Theorem~\ref{thm:Byott}, there exists a soluble normal subgroup $R$ of $(B,\cdot)$ with $(B,\cdot)/R\simeq\GL_3(2)^r\rtimes T$, and $(B,+)/(B,+)'$ is a $2$-group by Corollary \ref{cor:Byott-abelianization}. Thus, for $p\in\{3,7\}$, every Sylow $p$-subgroup
of $(B,+)$ lies in $(B,+)'$ and is a Sylow
$p$-subgroup of $(B,+)'$.
Since $(B,+)'$ is nilpotent, this Sylow subgroup
is unique and hence characteristic in $(B,+)'$.
Moreover, $(B,+)'$ is characteristic in $(B,+)$,
so the Sylow subgroup is characteristic, and
therefore normal, in $(B,+)$.
Hence both additive Sylow subgroups are normal
in $(B,+)$, contradicting
Proposition~\ref{prop:obstruction}.
\end{proof}
The proof of Theorem~\ref{thm:A} thus reduces to Proposition~\ref{prop:obstruction}. We construct a character $\overline\psi_{R,7,3}:(B,\cdot)/R\to\F_7^\times$ and evaluate it on a suitable element $z\in\GL_3(2)^r$. Since this subgroup is perfect, $\overline\psi_{R,7,3}(z)=1$, whereas the determinant calculation gives $\overline\psi_{R,7,3}(z)=4$. We can summarise the logical steps of the proof as follows:
\medskip 

\begin{center}
\setlength{\fboxsep}{7pt}
\fbox{$\displaystyle (B,\cdot)/R\simeq\GL_3(2)^r\rtimes T\ \rightsquigarrow\ \overline\psi_{R,7,3}\ \rightsquigarrow\ 1=4\text{ in }\F_7^\times$}
\end{center}
\subsection{Consequences of Theorem~\ref{thm:A}}

Before proving Proposition~\ref{prop:obstruction},
we record some immediate consequences of
Theorem~\ref{thm:A}.

\begin{corollary}\label{cor:intro-super}
Let $B$ be a finite skew brace with supersoluble additive group. Then $(B,\cdot)$ is soluble.
\end{corollary}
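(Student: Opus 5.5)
The plan is to deduce the corollary directly from Theorem~\ref{thm:A}. It suffices to check that a finite supersoluble group has nilpotent derived subgroup, since then the hypothesis of Theorem~\ref{thm:A} holds for $(B,+)$ and the conclusion follows at once.

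So let $G=(B,+)$ be supersoluble. We invoke the classical fact~\cite[Theorem~5.4.10]{Robinson}, already quoted in the introduction, that $G'$ is nilpotent. If one wants the argument in outline, it runs as follows. A supersoluble group $G$ has a chief series
\[
 1=G_0<G_1<\dots<G_n=G
\]
all of whose factors $G_{i}/G_{i-1}$ are cyclic of prime order. Each such factor has automorphism group $\Aut(C_p)\cong C_{p-1}$, which is abelian. Hence the conjugation action of $G$ on each factor has abelian image. It follows that $G'$ acts trivially on every factor $G_i/G_{i-1}$.

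Intersecting the chief series with $G'$ therefore gives a central series of $G'$, so $G'$ is nilpotent.

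Applying Theorem~\ref{thm:A} to $B$ then shows that $(B,\cdot)$ is soluble. There is no real obstacle here: all the difficulty lives in Theorem~\ref{thm:A}, that is, in Proposition~\ref{prop:obstruction}. The only point to keep straight is that the hypothesis concerns the additive group, which is exactly where Theorem~\ref{thm:A} requires nilpotency of the derived subgroup.
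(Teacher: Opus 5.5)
Your proposal is correct and follows the paper's proof exactly: the paper also deduces the corollary from Theorem~\ref{thm:A} by citing the nilpotency of the derived subgroup of a finite supersoluble group \cite[Theorem~5.4.10]{Robinson}. Your extra sketch of that fact is also sound (it uses the prime-order chief factors, the abelian automorphism groups of these factors, and the series $G_i\cap G'$, which is central in $G'$), though the paper simply cites it.
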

\begin{proof}
The derived subgroup of a finite supersoluble group is nilpotent~\cite[Theorem~5.4.10]{Robinson}. Apply Theorem~\ref{thm:A}.
\end{proof}
\begin{corollary}\label{cor:metabelian}
Let $B$ be a finite skew brace with metabelian additive group. Then $(B,\cdot)$ is soluble.
\end{corollary}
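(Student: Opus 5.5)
The plan is to derive this corollary directly from Theorem~\ref{thm:A}, in the same way as Corollary~\ref{cor:intro-super}. By definition, a group $G$ is metabelian if it has an abelian normal subgroup $A$ with $G/A$ abelian.

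The key step is to check that the hypothesis of Theorem~\ref{thm:A} holds, namely that $(B,+)'$ is nilpotent. Let $A\trianglelefteq(B,+)$ be abelian with $(B,+)/A$ abelian. Since the quotient is abelian, $(B,+)'\le A$. A subgroup of an abelian group is abelian, so $(B,+)'$ is abelian and in particular nilpotent.

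Theorem~\ref{thm:A} then applies directly to $B$ and gives that $(B,\cdot)$ is soluble. Equivalently, one can take as the definition of metabelian that $G''=1$. Then $(B,+)'$ has trivial derived subgroup, so it is abelian, and the same conclusion follows.

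There is no real obstacle here. The only point needing care is the elementary inclusion $G'\le A$ whenever $G/A$ is abelian, which holds because every commutator maps to the identity in an abelian quotient. All the substantive work is contained in Theorem~\ref{thm:A}, whose proof reduces to Proposition~\ref{prop:obstruction}.
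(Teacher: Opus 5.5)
Your proposal is correct and follows the paper's proof: $(B,+)'$ is abelian, hence nilpotent, so Theorem~\ref{thm:A} applies. Your extra check that $(B,+)'\le A$ under the ``abelian normal subgroup with abelian quotient'' definition is a harmless addition.
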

\begin{proof}
The subgroup $(B,+)'$ is abelian, hence nilpotent. Apply Theorem~\ref{thm:A}.
\end{proof}
By the Greither--Pareigis correspondence~\cite{GP} and Byott's translation~\cite{Byott1996}, a finite Galois extension $L/K$ admits a Hopf--Galois structure of type $N$ if and only if there is a skew brace with additive group isomorphic to $N$ and multiplicative group isomorphic to $\operatorname{Gal}(L/K)$; see~\cite[Appendix~A]{SV}.
\begin{corollary}\label{cor:Hopf}
Let $L/K$ be a finite Galois extension admitting a Hopf--Galois structure of type $N$. If $N'$ is nilpotent, then $\operatorname{Gal}(L/K)$ is soluble.
\end{corollary}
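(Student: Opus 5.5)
The plan is to translate the Hopf--Galois hypothesis into a statement about skew braces and then apply Theorem~\ref{thm:A} directly. No new group theory is needed; the only ingredient is the dictionary recalled just before the statement.

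First, by the Greither--Pareigis correspondence~\cite{GP}, in Byott's formulation~\cite{Byott1996} (see also~\cite[Appendix~A]{SV}), a Hopf--Galois structure of type $N$ on the Galois extension $L/K$ yields a finite skew brace $B$ with
\[
(B,+)\simeq N
\qquad\text{and}\qquad
(B,\cdot)\simeq\operatorname{Gal}(L/K).
\]
Concretely, such a structure is a regular subgroup of $\operatorname{Hol}(N)$ isomorphic to $G=\operatorname{Gal}(L/K)$, and the regular subgroup transports its group law to $N$.

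Second, derived subgroups are preserved by group isomorphisms, so $(B,+)'\simeq N'$. Since $N'$ is nilpotent by hypothesis, $(B,+)'$ is nilpotent.

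Third, Theorem~\ref{thm:A} now applies to $B$ and shows that $(B,\cdot)$ is soluble. Hence $\operatorname{Gal}(L/K)\simeq(B,\cdot)$ is soluble.

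The only point requiring care is the direction of the correspondence. We need that the Galois group becomes the \emph{multiplicative} group and the type $N$ becomes the \emph{additive} group, and not the other way round. With the conventions of~\cite[Appendix~A]{SV} this is exactly the case. The remark \emph{in particular, when $N$ is supersoluble or metabelian} then follows in the same way as Corollaries~\ref{cor:intro-super} and~\ref{cor:metabelian}: in both cases $N'$ is nilpotent, by~\cite[Theorem~5.4.10]{Robinson} for supersoluble $N$ and because $N'$ is abelian for metabelian $N$.
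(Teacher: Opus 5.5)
Your proposal is correct and follows the same route as the paper: the Greither--Pareigis correspondence with Byott's translation gives a skew brace $B$ with $(B,+)\simeq N$ and $(B,\cdot)\simeq\operatorname{Gal}(L/K)$, and Theorem~\ref{thm:A} then applies directly. Your remark on the direction of the correspondence, with $N$ as the additive group and the Galois group as the multiplicative group, is the right point to check, and you have it the right way round.
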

\begin{proof}
The correspondence above gives a finite skew brace $B$ with $(B,+)\simeq N$ and $(B,\cdot)\simeq\operatorname{Gal}(L/K)$. Apply Theorem~\ref{thm:A}.
\end{proof}
We now develop the determinant characters needed to prove Proposition~\ref{prop:obstruction}.
\section{Determinant characters} \label{sec:characters}
\subsection{Determinants on finite \texorpdfstring{$p$}{p}-groups}
Let $P$ be a finite non-trivial $p$-group. By \cite[Theorem 5.3.2]{Robinson} $P/\Phi(P)$ is an elementary
abelian $p$-group and can be regarded as a vector space over
$\F_p$. Define the iterated Frattini subgroups by
\[
 P_0=\Phi^0(P)=P,\qquad
 P_{i+1}=\Phi^{i+1}(P)=\Phi(P_i)
 \quad(i\geq0).
\]
Since the Frattini subgroup of a group is characteristic,
each $P_i$ is characteristic in $P$. Moreover, the Frattini
subgroup of a nontrivial finite $p$-group is proper. Hence
there exists an integer $m\geq0$ such that $P_m=1$, and we
obtain a characteristic series
\[
 P=P_0\geq P_1\geq\cdots\geq P_m=1.
\]
For every $0\leq i<m$, the quotient $ V_i=P_i/P_{i+1}=P_i/\Phi(P_i)$
is an elementary abelian $p$-group, viewed as an
$\F_p$-vector space. 
Let $\alpha\in\Aut(P)$. Since $\alpha$ preserves both
$P_i$ and $P_{i+1}$, it induces an automorphism
\[
 \alpha_i:V_i\longrightarrow V_i,\qquad
 \alpha_i(xP_{i+1})=\alpha(x)P_{i+1}.
\]

We define
\begin{equation}\label{eq:Delta-definition}
 \Delta_P:\Aut(P)\longrightarrow\F_p^\times,\qquad
 \Delta_P(\alpha)=
 \prod_{i=0}^{m-1}\det_{\F_p}(\alpha_i).
\end{equation}
If $P=1$, this is the empty product, with value $1$.
Equivalently, we may take the product over all $i\geq0$,
assigning determinant $1$ to every trivial factor. Finally, for $\alpha,\beta\in\Aut(P)$, the automorphism
induced by $\alpha\beta$ on $V_i$ is $\alpha_i\beta_i$.
The multiplicativity of the determinant therefore gives
\[
 \begin{aligned}
 \Delta_P(\alpha\beta)
 &=\prod_{i=0}^{m-1}\det_{\F_p}(\alpha_i\beta_i)=\left(\prod_{i=0}^{m-1}\det_{\F_p}(\alpha_i)\right)
   \left(\prod_{i=0}^{m-1}\det_{\F_p}(\beta_i)\right)=\Delta_P(\alpha)\Delta_P(\beta).
 \end{aligned}
\]
Hence $\Delta_P$ is a group homomorphism. 

We first establish some properties of the $\Delta$-map
that will be useful in the subsequent proofs.

\begin{lemma}\label{lem:Delta-series}
Let $P$ be a finite $p$-group and $\alpha\in\Aut(P)$. Then:
\begin{enumerate}
\setlength{\itemsep}{0pt}
\setlength{\parsep}{0pt}
\item For every isomorphism $f:P\to Q$, we have
$\Delta_Q(f\alpha f^{-1})=\Delta_P(\alpha)$.
\item If $I\trianglelefteq P$ is $\alpha$-invariant, then
\begin{equation}\label{eq:Delta-normal}
\Delta_P(\alpha)=\Delta_I(\alpha|_I)\Delta_{P/I}(\overline\alpha),
\end{equation}
where $\overline\alpha$ is induced on $P/I$.
\end{enumerate}
\end{lemma}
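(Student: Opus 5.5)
Part (1) is a matter of transporting structure. An isomorphism $f:P\to Q$ carries $\Phi(P)$ onto $\Phi(Q)$, since the Frattini subgroup is defined intrinsically as the intersection of the maximal subgroups. By induction, $f(P_i)=Q_i$ for all $i$, so $f$ induces $\F_p$-linear isomorphisms $f_i:V_i(P)\to V_i(Q)$. The automorphism induced by $f\alpha f^{-1}$ on $V_i(Q)$ is $f_i\alpha_i f_i^{-1}$, and the determinant is invariant under conjugation by a linear isomorphism. Taking the product over $i$ gives the claim.

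For part (2), I would first introduce a more flexible description of $\Delta_P$: it equals the product of the determinants of $\alpha$ on the factors of \emph{any} $\alpha$-invariant normal series of $P$ whose factors are elementary abelian. Concretely, let $P=N_0\geq N_1\geq\cdots\geq N_k=1$ be a series with each $N_j\trianglelefteq P$, each $N_j$ $\alpha$-invariant, and each $N_j/N_{j+1}$ elementary abelian. I claim that
\[
\Delta_P(\alpha)=\prod_j\det(\alpha\mid N_j/N_{j+1}).
\]
Granting this, (2) is immediate. Concatenate the Frattini series of $I$ with the preimage in $P$ of the Frattini series of $P/I$. This gives an $\alpha$-invariant series of $P$ through $I$ with elementary abelian factors. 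Its factors above $I$ are isomorphic, $\alpha$-equivariantly, to the Frattini factors of $P/I$, and its factors below $I$ are the Frattini factors of $I$. Hence the product of determinants over this series is $\Delta_{P/I}(\overline\alpha)\Delta_I(\alpha|_I)$. The preimages are normal in $P$ and $\alpha$-invariant because the Frattini terms of $P/I$ are characteristic in $P/I$ and $\alpha$ preserves $I$. The Frattini terms of $I$ are characteristic in $I$, hence normal in $P$ and $\alpha$-invariant.

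The main obstacle is the independence claim. My plan is to reduce it to a single-factor statement. Every series of the required kind should have the same product of determinants as a refinement, and any two such series should admit a common refinement. Refinement is harmless: if $N_j\geq M\geq N_{j+1}$ with $M$ $\alpha$-invariant, then $\det$ on $N_j/N_{j+1}$ equals the product of the determinants on $N_j/M$ and $M/N_{j+1}$, by block triangularity of the induced map on the $\F_p$-space $N_j/N_{j+1}$. For a common refinement I would use the Schreier/Zassenhaus construction. Given two $\alpha$-invariant normal series $(N_j)$ and $(M_l)$, form
\[
N_{j+1}(N_j\cap M_l).
\]
These subgroups are normal in $P$ and $\alpha$-invariant, and their successive quotients are subquotients of elementary abelian groups, hence elementary abelian. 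The Zassenhaus butterfly lemma then gives $\alpha$-equivariant isomorphisms between the factors of the two refinements, after a permutation. It therefore suffices to check that the butterfly isomorphism commutes with $\alpha$. This holds because it is induced by inclusions and quotient maps of $\alpha$-invariant subgroups. Applying the independence claim to the Frattini series (as one series) then completes the argument.

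A cleaner alternative would avoid Zassenhaus by working with the multiset of $\F_p[\langle\alpha\rangle]$-composition factors, which is a Jordan–Hölder invariant. In that formulation the determinant is multiplicative over composition factors, so the product over any series equals the product over composition factors of $P$ viewed as an $\langle\alpha\rangle$-group with the $P$-normal structure. I would present whichever of the two arguments reads shorter; the Jordan–Hölder version in the category of $\langle\alpha\rangle\ltimes P$-groups is probably the most economical.
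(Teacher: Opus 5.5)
Your proposal is correct. Part (1) is the same transport-of-structure argument as the paper's, but for part (2) you take a genuinely different route.

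\textbf{The paper's route.} It argues by direct induction on $|P|$, with $F=\Phi(P)$:
\begin{itemize}
\item it uses the $p$-group identity $\Phi(P/I)=FI/I$, which comes from $\Phi=P'P^p$;
\item it uses the isomorphisms $I/(I\cap F)\cong FI/F$ and $F/(I\cap F)\cong FI/I$, and applies the induction hypothesis to $I$ and to $F$ with the common normal subgroup $I\cap F$;
\item it splits $\det(\alpha_{P/F})$ along the invariant subspace $FI/F$ by block triangularity.
\end{itemize}
Everything stays inside Frattini series.

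\textbf{Your route.} You first prove a stronger invariance statement: $\Delta_P(\alpha)$ is the product of determinants over \emph{any} $\alpha$-invariant normal series of $P$ with elementary abelian factors. You get this either from Schreier refinement with $\alpha$-equivariant Zassenhaus isomorphisms, or from Jordan--H\"older for groups with operators $\mathrm{Inn}(P)\langle\alpha\rangle$. After that, (2) is just concatenating the Frattini series of $I$ with the pullback of the Frattini series of $P/I$.

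\textbf{What each buys.} Your argument never needs $\Phi(P/I)=\Phi(P)I/I$. It also directly produces the tool the paper later extracts by iterating (2): along $P\cap R_i$ in Lemma~\ref{lem:soluble-normal-character}, and along $U^r\trianglelefteq P_0$ in Lemma~\ref{lem:wreath-determinant}. The paper's induction is more elementary and self-contained, and avoids the refinement machinery. The two statements are equivalent, since iterating (2) along a series and using $\Delta=\det$ on elementary abelian groups recovers your invariance claim.

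One bookkeeping remark: Schreier refinements can contain repeated terms. These give trivial factors with determinant $1$, so they are harmless.
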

\begin{proof}
\setlength{\abovedisplayskip}{2pt}
\setlength{\belowdisplayskip}{2pt}
\setlength{\abovedisplayshortskip}{2pt}
\setlength{\belowdisplayshortskip}{2pt}
\setlength{\jot}{1pt}
For subgroups $Y\trianglelefteq X\leq P$ satisfying
$\alpha(X)=X$ and $\alpha(Y)=Y$, denote by
$\alpha_{X/Y}$ the induced automorphism
\[
\alpha_{X/Y}:X/Y\longrightarrow X/Y,
\qquad xY\longmapsto\alpha(x)Y.
\]
For \textup{(1)}, since $f$ maps maximal subgroups of $P$ onto maximal subgroups of $Q$, we have $f(\Phi^i(P))=\Phi^i(Q)$ for every $i\geq0$. Thus $f$ induces the isomorphisms
\[
\begin{aligned}
f_i:\Phi^i(P)/\Phi^{i+1}(P)
&\longrightarrow\Phi^i(Q)/\Phi^{i+1}(Q),\\
x\Phi^{i+1}(P)&\longmapsto f(x)\Phi^{i+1}(Q).
\end{aligned}
\]
Here $\alpha_i$ is an automorphism of $\Phi^i(P)/\Phi^{i+1}(P)$, whereas $f_i\alpha_i f_i^{-1}$ is an automorphism of $\Phi^i(Q)/\Phi^{i+1}(Q)$. For every $y\in\Phi^i(Q)$,
\[
\begin{aligned}
(f_i\alpha_i f_i^{-1})(y\Phi^{i+1}(Q))
&=f_i\bigl(\alpha(f^{-1}(y))\Phi^{i+1}(P)\bigr)\\
&=f(\alpha(f^{-1}(y)))\Phi^{i+1}(Q)\\
&=(f\alpha f^{-1})_i(y\Phi^{i+1}(Q)).
\end{aligned}
\]
Hence $(f\alpha f^{-1})_i=f_i\alpha_i f_i^{-1}$, and therefore
\[
\Delta_Q(f\alpha f^{-1})
=\prod_i\det(f_i\alpha_i f_i^{-1})
=\prod_i\det(\alpha_i)
=\Delta_P(\alpha).
\]
For \textup{(2)}, we argue by induction on $|P|$. The cases $I=1$ and $I=P$ are immediate. Suppose $1<I<P$ and put $F=\Phi(P)$. By \cite[Theorem~5.3.2]{Robinson},
\[
\Phi(P/I)=(P/I)'(P/I)^p=P'P^pI/I=FI/I.
\]
Consider the isomorphisms
\[
\begin{aligned}
\theta:I/(I\cap F)&\longrightarrow FI/F,
&x(I\cap F)&\longmapsto xF,\\
\eta:F/(I\cap F)&\longrightarrow FI/I,
&y(I\cap F)&\longmapsto yI.
\end{aligned}
\]
Their definitions give
\[
\begin{aligned}
\theta\,\alpha_{I/(I\cap F)}\,\theta^{-1}
&=\alpha_{FI/F},\\
\eta\,\alpha_{F/(I\cap F)}\,\eta^{-1}
&=\alpha_{FI/I}.
\end{aligned}
\]
Since $|I|,|F|<|P|$, induction followed by \textup{(1)} yields
\[
\begin{aligned}
\Delta_I(\alpha)
&=\Delta_{I\cap F}(\alpha)\Delta_{I/(I\cap F)}(\alpha)\\
&\overset{\textup{(1)}}{=}
  \Delta_{I\cap F}(\alpha)\Delta_{FI/F}(\alpha)\\
&=\Delta_{I\cap F}(\alpha)\det(\alpha_{FI/F}),\\
\Delta_F(\alpha)
&=\Delta_{I\cap F}(\alpha)\Delta_{F/(I\cap F)}(\alpha)\\
&\overset{\textup{(1)}}{=}
  \Delta_{I\cap F}(\alpha)\Delta_{FI/I}(\alpha).
\end{aligned}
\]
Here $FI/F$ is elementary abelian. Since $P/F$ is an elementary abelian $p$-group, we regard it as an $\F_p$-vector space. The subgroup $FI/F$ is an $\alpha_{P/F}$-invariant subspace. Choose a basis $v_1,\ldots,v_k$ of $FI/F$ and extend it to a basis
\[
\mathcal B=(v_1,\ldots,v_k,w_1,\ldots,w_\ell)
\]
of $P/F$. Since $\alpha_{P/F}(v_j)\in FI/F$, the images of the first $k$ basis vectors have no components along the $w_i$. Hence
\[
[\alpha_{P/F}]_{\mathcal B}
=\begin{pmatrix}A&C\\0&D\end{pmatrix},
\]
where $A$ represents $\alpha_{FI/F}$.
The images of $w_1,\ldots,w_\ell$ form a basis of
\[
(P/F)/(FI/F)\simeq P/FI.
\]
On this quotient, the components along the $v_i$ vanish, so $D$ represents $\alpha_{P/FI}$. Therefore
\[
\det A=\det(\alpha_{FI/F}),
\qquad
\det D=\det(\alpha_{P/FI}).
\]
Consequently,
\[
\det(\alpha_{P/F})
=\det\begin{pmatrix}A&C\\0&D\end{pmatrix}
=\det A\,\det D
=\det(\alpha_{FI/F})\det(\alpha_{P/FI}).
\]
Consequently,
\[
\begin{aligned}
\Delta_P(\alpha)
&=\det(\alpha_{P/F})\Delta_F(\alpha)\\
&=\det(\alpha_{FI/F})\det(\alpha_{P/FI})
  \Delta_{I\cap F}(\alpha)\Delta_{FI/I}(\alpha)\\
&=\Delta_I(\alpha)\det(\alpha_{P/FI})\Delta_{FI/I}(\alpha)\\
&=\Delta_I(\alpha)\Delta_{P/I}(\overline\alpha).
\end{aligned}
\]
To justify the last equality, recall that the iterated Frattini series of $P/I$ begins with
\[
P/I\geq\underbrace{FI/I}_{\Phi(P/I)}
\geq\Phi(FI/I)\geq\cdots.
\]
Let $\overline\alpha_0$ denote the automorphism induced on $(P/I)/(FI/I)$. By definition, separating the first determinant from the remaining factors gives
\[
\Delta_{P/I}(\overline\alpha)
=\det(\overline\alpha_0)\Delta_{FI/I}(\alpha).
\]
Consider the isomorphism
\[
\rho:(P/I)/(FI/I)\longrightarrow P/FI,
\qquad (xI)(FI/I)\longmapsto xFI.
\]
For every $x\in P$, we have
\[
\begin{aligned}
\rho\!\left(\overline\alpha_0((xI)(FI/I))\right)
&=\rho((\alpha(x)I)(FI/I))\\
&=\alpha(x)FI\\
&=\alpha_{P/FI}\!\left(\rho((xI)(FI/I))\right).
\end{aligned}
\]
Hence
\[
\alpha_{P/FI}=\rho\,\overline\alpha_0\,\rho^{-1},
\qquad
\det(\alpha_{P/FI})=\det(\overline\alpha_0).
\]
Substituting into the preceding expression yields
\[
\Delta_{P/I}(\overline\alpha)
=\det(\alpha_{P/FI})\Delta_{FI/I}(\alpha).
\]
\end{proof}

\begin{proposition}\label{prop:p-brace-determinant}
Let $C$ be a skew brace of $p$-power order and let $\alpha$ be an automorphism of $C$. Then
\begin{equation}\label{eq:p-brace-determinant}
\Delta_{(C,+)}(\alpha)=\Delta_{(C,\cdot)}(\alpha).
\end{equation}
\end{proposition}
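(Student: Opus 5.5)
I will argue by induction on $|C|$, using a canonical ideal so that both sides of \eqref{eq:p-brace-determinant} split via Lemma~\ref{lem:Delta-series}(2).

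\textbf{The ideal.} For $a,b\in C$ put $a*b=-a+a\cdot b-b=\lambda_a(b)-b$, and let $C^2$ be the additive subgroup generated by all $a*b$. It is standard that $C^2$ is an ideal of $C$ (see \cite{CSV,SV}). Since it is defined purely from the two operations, every brace automorphism $\alpha$ satisfies $\alpha(C^2)=C^2$. Moreover $\alpha$ restricts to a brace automorphism of the subbrace $C^2$ and induces a brace automorphism of the quotient $C/C^2$.

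\textbf{Properness.} I will use that a skew brace of $p$-power order is left nilpotent. This means the chain $C^1=C$, $C^{k+1}=C*C^k$ reaches $0$, so $C^2\neq C$ whenever $C\neq 0$. This is the known Rump/Cedó--Smoktunowicz--Vendramin result for skew braces of prime-power order, and it is the step I expect to be the main obstacle. If it must be proved, I would argue as follows. The image $\lambda(C,\cdot)\le\Aut(C,+)$ is a $p$-group. It therefore acts unipotently on the elementary abelian quotient $(C,+)/\Phi(C,+)$, which is characteristic and hence $\lambda$-invariant. The subgroup $C^2$ is generated by the elements $\lambda_a(b)-b$, so its image in this quotient is spanned by $(\lambda_a-1)(\bar b)$. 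Unipotency then forces this image to be a proper subspace. This gives $C^2+\Phi(C,+)\neq C$, and hence $C^2\neq C$.

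\textbf{Induction.} If $C=0$ both sides equal $1$. Otherwise $0\le C^2<C$. Since $C^2$ is normal and $\alpha$-invariant in both $(C,+)$ and $(C,\cdot)$, Lemma~\ref{lem:Delta-series}(2) gives
\[
\Delta_{(C,+)}(\alpha)=\Delta_{(C^2,+)}(\alpha|_{C^2})\,\Delta_{(C,+)/C^2}(\overline\alpha),
\qquad
\Delta_{(C,\cdot)}(\alpha)=\Delta_{(C^2,\cdot)}(\alpha|_{C^2})\,\Delta_{(C,\cdot)/C^2}(\overline\alpha).
\]
The ideal property gives $b\cdot C^2=b+C^2$, so these are the quotient groups of the skew brace $C/C^2$. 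The first factors agree by the induction hypothesis, because $C^2$ is a skew brace of smaller $p$-power order and $\alpha|_{C^2}$ is a brace automorphism of it.

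\textbf{The quotient.} In $C/C^2$ every $a*b$ is zero, so $a\cdot b=a+b$. Hence $C/C^2$ is a trivial brace whose additive and multiplicative groups are literally the same group. The same automorphism $\overline\alpha$ acts on both, so the second factors coincide. Combining the two displayed factorisations gives \eqref{eq:p-brace-determinant}.
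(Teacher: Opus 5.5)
Your proof is correct and follows the paper's argument: induction on $|C|$ via the ideal $C*C$, splitting both sides with Lemma~\ref{lem:Delta-series}(2), applying induction to the first factors, and noting that $C/(C*C)$ is a trivial brace, so the second factors coincide. The one addition is your self-contained proof that $C*C\neq C$: the $p$-group $\lambda(C,\cdot)$ acts unipotently on $(C,+)/\Phi(C,+)$, so the image of $C*C$ is a proper subspace. This argument is valid and can replace the paper's citation of left nilpotency.
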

\begin{proof}
We argue by induction on $|C|$. Write $C*C$ for the additive subgroup generated by $x*y=-x+x\cdot y-y$, where $x,y\in C$. By~\cite[Proposition~2.3]{CSV}, $C*C$ is an ideal of $C$ and the quotient $C/(C*C)$ is a trivial skew brace. Moreover, a skew brace of prime-power order is left nilpotent~\cite[Proposition~4.4]{CSV}; thus $C*C\neq C$ if $C\neq1$. The automorphism $\alpha$ preserves $C*C$. Applying Lemma~\ref{lem:Delta-series}\textup{(2)} to the two group structures yields
\[
\begin{aligned}
\Delta_{(C,+)}(\alpha)&=\Delta_{(C*C,+)}(\alpha)\Delta_{(C/(C*C),+)}(\overline\alpha),\\
\Delta_{(C,\cdot)}(\alpha)&=\Delta_{(C*C,\cdot)}(\alpha)\Delta_{(C/(C*C),\cdot)}(\overline\alpha).
\end{aligned}
\]
The first factors agree by induction and the second agree because the quotient is trivial.
\end{proof}
\subsection{A character associated with a soluble normal subgroup}
\begin{lemma}\label{lem:soluble-normal-character}
Let $R$ be a soluble normal subgroup of a finite group $G$, and let $p$ be a prime. There exists a homomorphism $\beta_{R,p}:G\to\F_p^\times$ such that, for every $P\in\Syl_p(G)$ and $h\in\N_G(P)$,
\begin{equation}\label{eq:radical-character}
\beta_{R,p}(h)=\Delta_{P\cap R}(\Ad_h|_{P\cap R}).
\end{equation}
Moreover,
\begin{equation}\label{eq:quotient-determinant}
\Delta_P(\Ad_h|_P)
=\beta_{R,p}(h)\Delta_{PR/R}(\Ad_{hR}|_{PR/R}).
\end{equation}
\end{lemma}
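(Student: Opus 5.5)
The plan is to build $\beta_{R,p}$ directly from a $G$-chief series through $R$, not from a Sylow subgroup of $R$. Descending from a Frattini-argument construction on $\N_G(Q)$, $Q\in\Syl_p(R)$, cannot work: $\beta_{R,p}$ has no reason to be trivial on $R$. For example, if $R=Q\rtimes C$ with $C$ a $p'$-group acting on $Q$ with nontrivial determinant, then $\beta_{R,p}$ is nontrivial on $C\le R$.

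\emph{Construction.} Since $R$ is soluble, we can fix a series of normal subgroups of $G$
\[
R=R_0\geq R_1\geq\cdots\geq R_s=1,
\]
each factor $R_j/R_{j+1}$ being a chief factor of $G$. Each such factor is elementary abelian of prime-power order. Let $J$ be the set of indices $j$ for which $V_j=R_j/R_{j+1}$ is a $p$-group, viewed as an $\F_p$-vector space. Conjugation gives an action of $G$ on $V_j$ by linear automorphisms, and we define
\[
\beta_{R,p}(g)=\prod_{j\in J}\det_{\F_p}\bigl(\Ad_g|_{V_j}\bigr).
\]
This is a homomorphism $G\to\F_p^\times$ by multiplicativity of the determinant, exactly as for $\Delta_P$. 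A priori it depends on the chosen series, but that does not matter: once it is shown to satisfy \eqref{eq:radical-character}, it is determined on every $\N_G(P)$.

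\emph{Verification of \eqref{eq:radical-character}.} Let $P\in\Syl_p(G)$ and $h\in\N_G(P)$, and put $S_j=P\cap R_j$. Then:
\begin{itemize}
\item $S_j$ is a Sylow $p$-subgroup of the normal subgroup $R_j$.
\item Each $S_j$ is normal in $S_0=P\cap R$, and $S_{j+1}=S_j\cap R_{j+1}$ is normal in $S_j$.
\item Each $S_j$ is $\Ad_h$-invariant, because $h$ normalizes both $P$ and $R_j$.
\end{itemize}
Applying Lemma~\ref{lem:Delta-series}(2) repeatedly along this series, first to $S_1\trianglelefteq S_0$, then inside $S_1$, and so on, gives
\[
\Delta_{P\cap R}(\Ad_h)=\prod_j\Delta_{S_j/S_{j+1}}(\Ad_h).
\]
Next, the map $S_j/S_{j+1}\to S_jR_{j+1}/R_{j+1}$ is an $h$-equivariant isomorphism. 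The target is a Sylow $p$-subgroup of $R_j/R_{j+1}$, since $S_j$ is Sylow in $R_j$. Hence the target equals $V_j$ if $j\in J$, and is trivial otherwise. By Lemma~\ref{lem:Delta-series}(1), $\Delta_{S_j/S_{j+1}}(\Ad_h)=\Delta_{V_j}(\Ad_h)$ for $j\in J$, and this is $\det(\Ad_h|_{V_j})$ since $V_j$ is elementary abelian with $\Phi(V_j)=1$. All other factors equal $1$. This proves \eqref{eq:radical-character}.

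\emph{Verification of \eqref{eq:quotient-determinant}.} Apply Lemma~\ref{lem:Delta-series}(2) to the $\Ad_h$-invariant normal subgroup $I=P\cap R$ of $P$. This gives
\[
\Delta_P(\Ad_h|_P)=\Delta_{P\cap R}(\Ad_h)\,\Delta_{P/(P\cap R)}(\overline{\Ad_h}).
\]
The first factor is $\beta_{R,p}(h)$ by \eqref{eq:radical-character}. For the second, the natural map $P/(P\cap R)\to PR/R$ is an isomorphism that intertwines $\overline{\Ad_h}$ with $\Ad_{hR}$, so Lemma~\ref{lem:Delta-series}(1) identifies it with $\Delta_{PR/R}(\Ad_{hR}|_{PR/R})$.

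\emph{Main obstacle.} The delicate step is the Sylow bookkeeping in the verification of \eqref{eq:radical-character}. One must check that $S_j/S_{j+1}$ really maps onto the full Sylow $p$-subgroup of $R_j/R_{j+1}$, so that it equals $V_j$ or $1$, and that all invariance and normality conditions needed for Lemma~\ref{lem:Delta-series}(2) hold at each step. Solubility of $R$ enters only here, through the existence of a chief series whose factors are elementary abelian, so that each $V_j$ carries a linear action with a determinant.
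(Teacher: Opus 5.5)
Your proof is correct and follows essentially the same route as the paper. The paper also fixes a $G$-invariant series of $R$ with elementary abelian factors (it refines the derived series instead of taking a chief series, which makes no difference), defines $\beta_{R,p}$ as the product of the determinants on the $p$-factors, and verifies both identities using $P\cap R_i\in\Syl_p(R_i)$ together with Lemma~\ref{lem:Delta-series}; your identification $S_j/S_{j+1}\cong S_jR_{j+1}/R_{j+1}$ with a Sylow $p$-subgroup of $R_j/R_{j+1}$ is the same check the paper performs by comparing orders.
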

\begin{proof}
Refining the derived series of $R$ by the primary and power filtrations of its abelian factors, choose a series $1=R_s\leq\cdots\leq R_0=R,$
where $R_i\trianglelefteq G$ and $R_i/R_{i+1}\simeq\F_{q_i}^{\,d_i},$
with each $q_i$ a prime. For $g\in G$, define
\[
\rho_i(g):R_i/R_{i+1}\longrightarrow R_i/R_{i+1},
\qquad xR_{i+1}\longmapsto gxg^{-1}R_{i+1},
\]
and put
\[
\beta_{R,p}(g)
=\prod_{\substack{0\leq i<s\\q_i=p}}
\det_{\F_p}(\rho_i(g)).
\]
Since $\rho_i(gg')=\rho_i(g)\rho_i(g')$, we have
$\beta_{R,p}(gg')=\beta_{R,p}(g)\beta_{R,p}(g')$.
Fix $P\in\Syl_p(G)$ and $h\in\N_G(P)$. Since $R_i\trianglelefteq G$, we have $P\cap R_i\in\Syl_p(R_i)$. Therefore
\[
\left|\frac{P\cap R_i}{P\cap R_{i+1}}\right|
=\frac{|R_i|_p}{|R_{i+1}|_p}
=\begin{cases}
p^{d_i},&q_i=p,\\
1,&q_i\neq p.
\end{cases}
\]
Let $\sigma_i$ be the automorphism induced by $\Ad_h$ on $(P\cap R_i)/(P\cap R_{i+1})$, namely
\[
\sigma_i\bigl(x(P\cap R_{i+1})\bigr)
=hxh^{-1}(P\cap R_{i+1}).
\]
If $q_i\neq p$, this factor is trivial and $\det(\sigma_i)=1$.
If $q_i=p$, the map
\[
\theta_i:\frac{P\cap R_i}{P\cap R_{i+1}}
\longrightarrow R_i/R_{i+1},
\qquad x(P\cap R_{i+1})\longmapsto xR_{i+1},
\]
is injective, and both groups have order $p^{d_i}$; hence it is an isomorphism. Moreover,
\[
\begin{aligned}
\theta_i\sigma_i\bigl(x(P\cap R_{i+1})\bigr)
&=hxh^{-1}R_{i+1}\\
&=\rho_i(h)\theta_i\bigl(x(P\cap R_{i+1})\bigr).
\end{aligned}
\]
Thus $\rho_i(h)=\theta_i\sigma_i\theta_i^{-1}$ and
$\det(\sigma_i)=\det(\rho_i(h))$.
Iterating Lemma~\ref{lem:Delta-series}\textup{(2)} along $1=P\cap R_s\leq\cdots\leq P\cap R_0=P\cap R$
now gives
\[
\begin{aligned}
\Delta_{P\cap R}(\Ad_h|_{P\cap R})
&=\prod_{i=0}^{s-1}\det_{\F_p}(\sigma_i)=\prod_{\substack{0\leq i<s\\q_i=p}}
  \det_{\F_p}(\rho_i(h))
=\beta_{R,p}(h),
\end{aligned}
\]
proving \eqref{eq:radical-character}. Finally, the isomorphism
\[
\theta:P/(P\cap R)\longrightarrow PR/R,
\qquad x(P\cap R)\longmapsto xR,
\]
satisfies $\theta\,\overline{\Ad_h}\,\theta^{-1}
=\Ad_{hR}|_{PR/R},$
where $\overline{\Ad_h}$ is induced on $P/(P\cap R)$.
Applying Lemma~\ref{lem:Delta-series}\textup{(2)}, then
\textup{(1)} and \eqref{eq:radical-character}, yields
\[
\begin{aligned}
\Delta_P(\Ad_h|_P)
&=\Delta_{P\cap R}(\Ad_h|_{P\cap R})
  \Delta_{P/(P\cap R)}(\overline{\Ad_h})\\
&=\beta_{R,p}(h)\Delta_{PR/R}(\Ad_{hR}|_{PR/R}).
\end{aligned}
\]
\end{proof}
\section{Normal additive Sylow subgroups and quotient characters}\label{sec:normal-sylows}
\subsection{Normalizers of Sylow subbraces}\label{subsec:sylow-normalizers}
We study multiplicative normalizers when two normal additive Sylow subgroups have coprime orders and hence commute.
\begin{lemma}\label{lem:brace-normalizer}
Let $B$ be a finite skew brace and let $p,q$ be
distinct primes dividing $|B|$.
Suppose that $(P,+)$ and $(Q,+)$ are normal Sylow
$p$- and $q$-subgroups of $(B,+)$, respectively.
Then:
\begin{enumerate}
\item
$\N_{(B,\cdot)}\bigl((P,\cdot)\bigr)$
is a subbrace of $B$.

\item
$\N_{(B,\cdot)}\bigl((P,\cdot)\bigr)\cap(Q,\cdot)
 \in
 \Syl_q\!\left(
 \N_{(B,\cdot)}\bigl((P,\cdot)\bigr)
 \right).$

\item
For every $ t\in
 \N_{(B,\cdot)}\bigl((P,\cdot)\bigr)\cap(Q,\cdot),$
the restriction $\lambda_t|_P$ is an automorphism
of the skew brace $P$, and
\begin{equation}\label{eq:normalizer-action}
 \lambda_t(x)=t\cdot x\cdot t^{-1}
 \qquad(x\in P),
\end{equation}
where $t^{-1}$ denotes the inverse in $(B,\cdot)$.
\end{enumerate}
\end{lemma}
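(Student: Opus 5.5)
Since $(P,+)$ is a normal Sylow $p$-subgroup of $(B,+)$, it is characteristic in $(B,+)$. Hence $P$ is a subbrace, and moreover $\lambda_b(P)=P$ for all $b\in B$; in other words, $P$ is a left ideal. The same holds for $Q$. Write $N=\N_{(B,\cdot)}((P,\cdot))$.

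\textbf{Part (1).} We must show that $(N,+)$ is an additive subgroup; $(N,\cdot)$ is a multiplicative subgroup by definition. For $a\in B$ and $x\in P$ we compute
\[
a\cdot x\cdot a^{-1}=(a+\lambda_a(x))\cdot a^{-1}=a+\lambda_a(x)+\lambda_{a+\lambda_a(x)}(a^{-1}).
\]
We will use that $P$ is a left ideal, so $a\cdot P=a+P$, and that additive cosets of the normal subgroup $(P,+)$ are two-sided. Then $a\in N$ is equivalent to $a\cdot P\cdot a^{-1}=P$, which is equivalent to $(a+P)\cdot a^{-1}=P$. Rewriting $(a+x)\cdot a^{-1}=(a+x)+\lambda_{a+x}(a^{-1})$, the condition becomes: for all $x\in P$,
\[
\lambda_{a+x}(a^{-1})\in -a+P .
\]
An additive description of this type should show closure under $+$. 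I am not sure this direct route works, so I would try the alternative first: let $\pi\colon B\to B/P$ be the map onto the additive quotient. Since $(P,\cdot)$ need not be normal, $B/P$ is not a brace quotient, but $(B,\cdot)$ acts on the coset space via $\lambda$ and multiplication. Then $a\in N$ iff $a$ fixes the coset structure. A cleaner approach, I expect, uses $Q$: by coprimality $(B,+)=$ complement data, and $N$ should equal $P+(N\cap \text{Hall }p'\text{-part})$. Honestly the main obstacle is part (1), and I would aim to show that $N=\{a: \lambda_a\text{ and }a\text{ satisfy } \lambda_a(P)=P \text{ and } a^{-1}\cdot P\cdot a\subseteq P\}$ is closed under $+$ using $a+b=a\cdot\lambda_a^{-1}(b)$. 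Under that identity, if $a\in N$ then $a+b\in N$ iff $\lambda_a^{-1}(b)\in N$. So it suffices that $\lambda_a(N)=N$ for $a\in N$, which would follow if $N$ is invariant under $\lambda_a$ composed with conjugation. This is the step I would verify carefully.

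\textbf{Part (2).} This follows from Sylow theory inside the brace $N$. $(Q,+)$ is characteristic, so $N\cap Q$ is a subbrace, and $(N\cap Q,+)$ equals the unique, hence normal, additive Sylow $q$-subgroup of $(N,+)$, since $(Q\cap N,+)$ is normal of $q$-power order and $|N|_q$ is the same for both group structures. Its order is $|N|_q$, so it is a multiplicative Sylow $q$-subgroup of $N$.

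\textbf{Part (3).} Take $t\in N\cap Q$ and $x\in P$. Since $P$ and $Q$ are additively normal with coprime orders, $x+t=t+x$. Then $t\cdot x=t+\lambda_t(x)$ with $\lambda_t(x)\in P$. On the other hand, $t\cdot x\cdot t^{-1}=y\in P$ because $t$ normalizes $(P,\cdot)$, so $t\cdot x=y\cdot t=y+\lambda_y(t)$ with $\lambda_y(t)\in Q$. Comparing the decompositions $t+\lambda_t(x)=\lambda_t(x)+t$ and $y+\lambda_y(t)$ in $P\times Q\le(B,+)$ gives $y=\lambda_t(x)$, which is \eqref{eq:normalizer-action}. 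Hence $\lambda_t|_P$ is additive, being a restriction of $\lambda_t$, and multiplicative, being a conjugation. It is bijective on $P$, so it is a brace automorphism.
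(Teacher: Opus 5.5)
\textbf{Part (1) is not proved.} Everything else depends on it. In part (2) you need $N=\N_{(B,\cdot)}((P,\cdot))$ to be a subbrace, both to speak of $(N,+)$ and to know that $N\cap Q$ is a multiplicative subgroup.

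You sketch several routes but finish none, and your last reduction makes no progress. For a multiplicative subgroup $N$, the condition $\lambda_a(N)=N$ for all $a\in N$ is \emph{equivalent} to $N$ being a subbrace, because $a+b=a\cdot\lambda_a^{-1}(b)$ and $-a=\lambda_a(a^{-1})$. So ``it suffices that $\lambda_a(N)=N$'' just restates the claim.

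The paper settles (1) by citing an external theorem: $\N_{(B,+)}((P,+))\cap\N_{(B,\cdot)}((P,\cdot))$ is a subbrace, and the first factor is all of $B$. A short self-contained argument is also available if you compare $P\cdot b$ with $b\cdot P$ instead of conjugating:
\begin{itemize}
\item Since $P$ is a left ideal, $b\cdot P=b+\lambda_b(P)=b+P$, while $P\cdot b=\{x+\lambda_x(b):x\in P\}$.
\item Both sets have $|P|$ elements, so $b\in N$ iff $P\cdot b\subseteq b+P$.
\item Since $x\in P$ and $P+b=b+P$, this says $\lambda_x(b)+P=b+P$ for all $x\in P$.
\item Each $\lambda_x$ preserves $P$, so it induces an automorphism $\bar\lambda_x$ of $(B,+)/(P,+)$. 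Thus $N$ is the full preimage of the common fixed-point subgroup of the $\bar\lambda_x$ ($x\in P$), and hence $(N,+)\le(B,+)$.
\end{itemize}
Your own condition $\lambda_{a+x}(a^{-1})\in -a+P$ is exactly this condition for $a^{-1}$. To see it, write $\lambda_{a+x}=\lambda_a\lambda_{\lambda_a^{-1}(x)}$ and use $\lambda_a^{-1}(-a)=a^{-1}$. Having $a$ inside the subscript is what hid the linearity.

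\textbf{Part (2) has an insufficient justification.} Your reason for $|N\cap Q|=|N|_q$ does not work: a normal subgroup of $q$-power order need not be Sylow. What is needed is that every $q$-subgroup of $(N,+)$ is a $q$-subgroup of $(B,+)$. It therefore lies in the unique Sylow $q$-subgroup $Q$ of $(B,+)$, hence in $N\cap Q$. Equivalently, as in the paper, $(N,+)/(N\cap Q)$ embeds in $(B,+)/(Q,+)$, whose order is prime to $q$.

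\textbf{Part (3)} is the paper's argument and is fine.
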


\begin{proof}
\textup{(1)}
Since $(P,+)$ and $(Q,+)$ are normal Sylow subgroups
of $(B,+)$, they are characteristic in $(B,+)$.
Thus $P$ and $Q$ are left ideals and subbraces of $B$.
By \cite[Theorem~A(2)]{CampMoraPerezAlemanyPerezAltarriba},
the intersection $ \N_{(B,+)}\bigl((P,+)\bigr)
 \cap
 \N_{(B,\cdot)}\bigl((P,\cdot)\bigr)$
is a subbrace of $B$.
Since $(P,+)\trianglelefteq(B,+)$, its additive
normalizer is $(B,+)$.
Hence this intersection is precisely
$\N_{(B,\cdot)}\bigl((P,\cdot)\bigr)$.

\medskip
\noindent
\textup{(2)}
Write $\left|\N_{(B,\cdot)}\bigl((P,\cdot)\bigr)\right|
 =q^a m,$ where $q\nmid m.$
By \textup{(1)}, the additive quotient map restricts
to a homomorphism
\[
 \begin{aligned}
 \pi:
 \left(\N_{(B,\cdot)}\bigl((P,\cdot)\bigr),+\right)
 &\longrightarrow (B,+)/(Q,+),\\
 b&\longmapsto b+Q.
 \end{aligned}
\]
Its kernel is $ \ker\pi=
 \N_{(B,\cdot)}\bigl((P,\cdot)\bigr)\cap Q.$
Since the normalizer and $Q$ are subbraces, this intersection is also a multiplicative subgroup.
This is a $q$-group, whereas $\operatorname{im}\pi$
has order prime to $q$.
The first isomorphism theorem therefore gives
$|\ker\pi|=q^a$ proving \textup{(2)}.

\medskip
\noindent
\textup{(3)}
Take $ t\in
 \N_{(B,\cdot)}\bigl((P,\cdot)\bigr)\cap(Q,\cdot)$ and $x \in P$
and put $y=t\cdot x\cdot t^{-1}$.
Then $y\cdot t=t\cdot x$ and so $ y+\lambda_y(t)=t+\lambda_t(x).$
Note that $ y,\lambda_t(x)\in P$ and $ t,\lambda_y(t)\in Q.$
Since $(P,+)$ and $(Q,+)$ centralize each other,
we may rewrite the equality as $ y+\lambda_y(t)=\lambda_t(x)+t.$
Comparing the unique components in $P$ and $Q$
gives $y=\lambda_t(x)$, proving
\eqref{eq:normalizer-action}.
Thus $\lambda_t|_P$ is an additive automorphism
and agrees with a multiplicative automorphism
of $(P,\cdot)$.
It is therefore an automorphism of the skew brace $P$.
\end{proof}
\subsection{Constructing the quotient character}\label{subsec:quotient-character}
We first record the group-theoretic lemma used to make the character descend to the multiplicative quotient.
\begin{lemma}\label{lem:Frattini-character}
Let $G$ be a finite group, let $q$ be a prime, and let $A$ be an abelian $q$-group. Let $\psi:G\to A$ be a homomorphism and $R\trianglelefteq G$. Suppose $p\ne q$, $P\in\Syl_p(G)$ and $Q\in\Syl_q(\N_G(P))$. If $\psi$ is trivial on $R\cap Q$, then it is trivial on $R$.
\end{lemma}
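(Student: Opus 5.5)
The plan is to pass to a quotient in which the image of $R$ becomes a normal $q$-subgroup on which $\psi$ is faithful, and then use the Frattini argument to push that image into the image of $Q$.

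\emph{Step 1: the kernel and the quotient.} Set $K=\ker\psi\cap R$. Since $\ker\psi\trianglelefteq G$ and $R\trianglelefteq G$, we have $K\trianglelefteq G$ and $K\le R$. Moreover $R/K\simeq\psi(R)\le A$ is an abelian $q$-group. Write $\overline G=G/K$, $\overline R=R/K$, $\overline P=PK/K$ and $\overline Q=QK/K$. The map $\psi$ factors through a homomorphism $\overline\psi:\overline G\to A$, and $\overline\psi$ is injective on $\overline R$.

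\emph{Step 2: $\overline Q$ is a Sylow $q$-subgroup of $\N_{\overline G}(\overline P)$.} Clearly $\overline P\in\Syl_p(\overline G)$. We show $\N_{\overline G}(\overline P)=\N_G(P)K/K$.
\begin{itemize}
\item Suppose $gK$ normalises $\overline P$. Then $g$ normalises $PK$.
\item Hence $gPg^{-1}$ is a Sylow $p$-subgroup of $PK$, so $gPg^{-1}=kPk^{-1}$ for some $k\in PK$. Since $PK=KP$, we may take $k\in K$.
\item Then $k^{-1}g\in\N_G(P)$, so $g\in K\,\N_G(P)$.
\end{itemize}
The reverse inclusion is clear. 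The image of a Sylow $q$-subgroup under the surjection $\N_G(P)\to\N_G(P)K/K$ is a Sylow $q$-subgroup. Hence $\overline Q\in\Syl_q(\N_{\overline G}(\overline P))$.

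\emph{Step 3: $\overline R$ centralises $\overline P$.} Since $A$ is abelian, $\overline\psi$ kills all commutators. In particular it is trivial on $[\overline R,\overline P]$. But $[\overline R,\overline P]\le\overline R$ because $\overline R\trianglelefteq\overline G$, and $\overline\psi$ is injective on $\overline R$. Therefore $[\overline R,\overline P]=1$. It follows that $\overline R\le\N_{\overline G}(\overline P)$. Inside this normaliser, $\overline R$ is a normal $q$-subgroup, so it lies in every Sylow $q$-subgroup, in particular in $\overline Q$.

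\emph{Step 4: lifting back to $G$.} The inclusion $\overline R\le\overline Q$ means $R\le QK$. By Dedekind's rule, using $K\le R$,
\[
R=R\cap QK=(R\cap Q)K.
\]
Hence $\psi(R)=\psi(R\cap Q)\,\psi(K)=1$, using the hypothesis that $\psi$ is trivial on $R\cap Q$.

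The only delicate point is Step 2, namely that normalisers of Sylow subgroups map onto normalisers in quotients. This is the Frattini argument applied inside $PK$. Note that Step 3 is where $p\neq q$ is used implicitly: $\overline R$ is a $q$-group, so it is a $p'$-group and meets $\overline P$ trivially, which is what makes $\overline R$ centralise $\overline P$ rather than merely intersect it.
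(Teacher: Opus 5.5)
Your proof is correct, but it takes a different route from the paper's. The paper never passes to a quotient. Since $p\neq q$ forces $P\le\ker\psi$, it applies the Frattini argument to the normal subgroup $RP\cap\ker\psi$ of $RP$ to get $\psi(R)=\psi(\N_R(P))$. It then argues elementwise: it takes the $q$-part of an element $u\in\N_R(P)$ with $\psi(u)\neq1$ and conjugates it into $R\cap Q$ by Sylow's theorem in $\N_G(P)$, using that $\psi$ is constant on conjugacy classes.

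You instead factor out $K=R\cap\ker\psi$ and observe that $\overline R$ embeds in the abelian group $A$ via $\overline\psi$, so it is a central $q$-subgroup. You then combine two facts:
\begin{itemize}
\item normalisers of Sylow subgroups map onto normalisers in quotients (your Step~2 is exactly the paper's Lemma~\ref{lem:normalizer-quotient}, with $K$ in place of $R$);
\item a normal $q$-subgroup lies in every Sylow $q$-subgroup.
\end{itemize}
Dedekind's law then yields $R=(R\cap Q)(R\cap\ker\psi)$, so $\psi(R)=\psi(R\cap Q)$, and the hypothesis on $R\cap Q$ enters only in the last line.

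Your route is more structural and never needs $\psi(P)=1$. The paper's is more self-contained: one Frattini argument on $RP$ plus Sylow conjugacy in $\N_G(P)$, with no quotient and no separate normaliser lemma.

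One correction to your closing remark: $p\neq q$ is not used in Step~3, or anywhere else in your argument. The vanishing of $[\overline R,\overline P]$ comes only from $A$ being abelian and $\overline\psi$ being injective on $\overline R$. The same reasoning gives $[\overline R,\overline G]=1$. Moreover, the fact that $\overline R$ meets $\overline P$ trivially would not by itself force $\overline R$ to centralise $\overline P$. The remark is misleading but does not affect the validity of the proof.
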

\begin{proof}
Since $\psi(P)=1$, the subgroup $P$ is a Sylow $p$-subgroup of $RP\cap\ker\psi$. The latter is normal in $RP$, so the Frattini argument (see \cite[Theorem~5.2.14]{Robinson}) yields $RP=(RP\cap\ker\psi)\N_{RP}(P)$. Since $\N_{RP}(P)=\N_R(P)P$, we obtain
\[
\psi(R)=\psi(RP)=\psi(\N_{RP}(P))=\psi(\N_R(P)).
\]
If this image were nontrivial, choose $u\in\N_R(P)$ with $\psi(u)\ne1$. Write $|u|=q^a m$, where $(q,m)=1$, and set $v=u^m$. Then $v$ is a $q$-element of $\N_R(P)$ and $\psi(v)=\psi(u)^m\ne1$. By Sylow's theorem, some conjugate $t$ of $v$ by an element of $\N_G(P)$ belongs to $Q$. The normality of $R$ gives $t\in R\cap Q$, whereas $\psi(t)=\psi(v)\ne1$, a contradiction.
\end{proof}
\begin{definition}\label{def:determinant-character}
Let $B$ be a finite skew brace, let $p$ be a prime
dividing $|B|$, and suppose that $(P,+)$ is a normal
Sylow $p$-subgroup of $(B,+)$.
Let $R$ be a soluble normal subgroup of $(B,\cdot)$.
Define
\[
 \chi_{R,p}:(B,\cdot)\longrightarrow\F_p^\times,
 \qquad
 \chi_{R,p}(b)=
 \Delta_{(P,+)}(\lambda_b|_P)\,
 \beta_{R,p}(b)^{-1}.
\]
\end{definition}

Since $(P,+)$ is characteristic in $(B,+)$,
the restriction $\lambda_b|_P$ belongs to
$\Aut(P,+)$ for every $b\in B$.
Thus $\chi_{R,p}$ is well defined.
Both factors in its definition are homomorphisms,
so $\chi_{R,p}$ is a homomorphism.

\begin{lemma}\label{lem:quotient-character}
Let $B$ be a finite skew brace, and let $p,q$ be distinct prime divisors of $|B|$ such that $q\mid p-1$. Suppose that $(P,+)$ and $(Q,+)$ are normal Sylow $p$- and $q$-subgroups of $(B,+)$, respectively. Let $R$ be a soluble normal subgroup of $(B,\cdot)$. Then, for every $t\in\N_{(B,\cdot)}((P,\cdot))\cap(Q,\cdot)$,
\begin{equation}\label{eq:general-local-character}
\chi_{R,p}(t)=\Delta_{(P,\cdot)R/R}(\Ad_{tR}).
\end{equation}
Write $p-1=q^a m$, where $a\geq1$ and $\gcd(q,m)=1$. The character $\psi_{R,p,q}=\chi_{R,p}^{\,m}$ is trivial on $R$ and induces
\[
\overline\psi_{R,p,q}:(B,\cdot)/R\longrightarrow\F_p^\times,
\qquad
\overline\psi_{R,p,q}(bR)=\chi_{R,p}(b)^m.
\]
\end{lemma}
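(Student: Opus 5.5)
The proof has two parts. First we establish \eqref{eq:general-local-character} by comparing additive and multiplicative determinants on $P$. Then we use Lemma~\ref{lem:Frattini-character} to show that $\chi_{R,p}^m$ kills $R$.

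\emph{Step 1: the local formula.} Fix $t\in\N_{(B,\cdot)}((P,\cdot))\cap(Q,\cdot)$. By Lemma~\ref{lem:brace-normalizer}(3), $\lambda_t|_P$ is an automorphism of the skew brace $P$, and on $P$ it coincides with $\Ad_t$ computed in $(B,\cdot)$. Since $P$ has $p$-power order, Proposition~\ref{prop:p-brace-determinant} gives
\[
\Delta_{(P,+)}(\lambda_t|_P)=\Delta_{(P,\cdot)}(\Ad_t|_{(P,\cdot)}).
\]
Because $|B|_p=|P|$, the group $(P,\cdot)$ is a Sylow $p$-subgroup of $(B,\cdot)$, and $t$ normalizes it. We apply Lemma~\ref{lem:soluble-normal-character} with $G=(B,\cdot)$ and $h=t$. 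Equation \eqref{eq:quotient-determinant} then gives
\[
\Delta_{(P,\cdot)}(\Ad_t)=\beta_{R,p}(t)\,\Delta_{(P,\cdot)R/R}(\Ad_{tR}).
\]
Dividing by $\beta_{R,p}(t)$ yields \eqref{eq:general-local-character}.

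\emph{Step 2: triviality on $R$.} We apply Lemma~\ref{lem:Frattini-character} with $G=(B,\cdot)$, the Sylow $p$-subgroup $(P,\cdot)$, and the prime $q$. The target is $A$, the Sylow $q$-subgroup of $\F_p^\times$, which is cyclic of order $q^a$. Since $\F_p^\times$ is cyclic of order $q^am$, the $m$-th power map sends $\F_p^\times$ into $A$, so $\psi=\chi_{R,p}^m:(B,\cdot)\to A$ is a homomorphism into an abelian $q$-group.

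By Lemma~\ref{lem:brace-normalizer}(2), $S=\N_{(B,\cdot)}((P,\cdot))\cap(Q,\cdot)$ is a Sylow $q$-subgroup of $\N_G(P)$. It then suffices to check that $\psi$ is trivial on $R\cap S$. For $t\in R\cap S$ the coset $tR$ is trivial, so $\Ad_{tR}$ is the identity on $(P,\cdot)R/R$. By Step~1, $\chi_{R,p}(t)=\Delta(\mathrm{id})=1$, and hence $\psi(t)=1$. Lemma~\ref{lem:Frattini-character} now gives that $\psi$ is trivial on $R$, so $\psi$ factors through $(B,\cdot)/R$, and $\overline\psi_{R,p,q}(bR)=\chi_{R,p}(b)^m$ is well defined.

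\emph{Main obstacle.} The delicate point is Step~1. It requires identifying $\lambda_t|_P$ with multiplicative conjugation, so that the additive determinant can be converted into a multiplicative one. This is exactly where Lemma~\ref{lem:brace-normalizer}(3) and Proposition~\ref{prop:p-brace-determinant} are needed. The hypothesis $q\mid p-1$ guarantees $a\ge1$, so that $A$ is nontrivial and the resulting character carries information; the descent argument itself goes through regardless.
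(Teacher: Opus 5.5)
Your proposal is correct and follows the paper's proof essentially step for step. The local formula comes from Lemma~\ref{lem:brace-normalizer}(3), Proposition~\ref{prop:p-brace-determinant} and \eqref{eq:quotient-determinant}. The descent to $(B,\cdot)/R$ comes from Lemma~\ref{lem:Frattini-character}, applied to the Sylow $q$-subgroup $\N_{(B,\cdot)}((P,\cdot))\cap(Q,\cdot)$ of Lemma~\ref{lem:brace-normalizer}(2), once one notes that $\chi_{R,p}^m$ takes values in the $q$-part of $\F_p^\times$.

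You also check explicitly that $(P,\cdot)$ is a Sylow $p$-subgroup of $(B,\cdot)$, which is needed to invoke Lemma~\ref{lem:soluble-normal-character}. This holds because $P$ is a multiplicative subgroup, as $(P,+)$ is characteristic, and has order $|B|_p$. The paper leaves this point implicit.
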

\begin{proof}
Take $t\in\N_{(B,\cdot)}((P,\cdot))\cap(Q,\cdot)$. By Lemma~\ref{lem:brace-normalizer}, the restriction $\lambda_t|_P$ is an automorphism of the skew brace $P$ and agrees with multiplicative conjugation by $t$. Proposition~\ref{prop:p-brace-determinant} and Lemma~\ref{lem:soluble-normal-character} give
\[
\begin{aligned}
\Delta_{(P,+)}(\lambda_t|_P)
&=\Delta_{(P,\cdot)}(\Ad_t|_P)\\
&=\beta_{R,p}(t)\,\Delta_{(P,\cdot)R/R}(\Ad_{tR}).
\end{aligned}
\]
Dividing by $\beta_{R,p}(t)$ proves \eqref{eq:general-local-character}.
Since $\psi_{R,p,q}(b)^{q^a}=\chi_{R,p}(b)^{p-1}=1$ for every $b\in B$, the image of $\psi_{R,p,q}$ is an abelian $q$-group. If, in addition, $t\in R$, then $tR=R$, so \eqref{eq:general-local-character} gives $\chi_{R,p}(t)=1$, and hence $\psi_{R,p,q}(t)=1$. Thus $\psi_{R,p,q}$ is trivial on $R\cap\N_{(B,\cdot)}((P,\cdot))\cap(Q,\cdot)$. By Lemma~\ref{lem:brace-normalizer}\textup{(2)}, the group $\N_{(B,\cdot)}((P,\cdot))\cap(Q,\cdot)$ is a Sylow $q$-subgroup of $\N_{(B,\cdot)}((P,\cdot))$. Lemma~\ref{lem:Frattini-character} therefore gives $R\leq\ker\psi_{R,p,q}$, yielding the stated quotient character.
\end{proof}
Summarizing the preceding results, we obtain the following construction of determinant characters and their factorization through the multiplicative quotient.
\begin{center}
\setlength{\fboxsep}{9pt}
\fbox{\begin{tikzpicture}[every node/.style={align=center}]
\node (A) at (0,0)
{$R\trianglelefteq(B,\cdot)$ soluble,
$\ (P,+)\trianglelefteq(B,+)$};
\node (B) at (0,-1.15)
{$\chi_{R,p}:(B,\cdot)\longrightarrow\F_p^\times$};
\node (C) at (0,-2.9)
{$\psi_{R,p,q}=\chi_{R,p}^{\,m}:
(B,\cdot)\longrightarrow\F_p^\times$};
\node (D) at (0,-4.15)
{$\overline\psi_{R,p,q}:
(B,\cdot)/R\longrightarrow\F_p^\times$};
\draw[->,thick] (A.south) -- (B.north);
\draw[->,thick] (B.south) --
node[right,font=\scriptsize]
{$q\mid p-1,\quad (Q,+)\trianglelefteq(B,+)$\\
$[P,Q]_+=\{0\}$}
(C.north);
\draw[->,thick] (C.south) --
node[right,font=\scriptsize]
{$R\leq\ker\psi_{R,p,q}$}
(D.north);
\end{tikzpicture}}
\end{center}

\medskip 

We first recall a group-theoretic fact about normalizers in quotients, which will be used to evaluate the quotient character on the normalizer of a multiplicative Sylow subgroup.
\begin{lemma}\label{lem:normalizer-quotient}
Let $R\trianglelefteq G$ and $P\in\Syl_p(G)$. The quotient map induces a surjective homomorphism
\[
\varphi:\N_G(P)\longrightarrow\N_{G/R}(PR/R),\qquad g\longmapsto gR.
\]
\end{lemma}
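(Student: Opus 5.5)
The map $\varphi$ is well defined and a homomorphism: if $g\in\N_G(P)$, then
\[
(gR)(PR/R)(gR)^{-1}=gPg^{-1}R/R=PR/R,
\]
so $gR\in\N_{G/R}(PR/R)$. It is the restriction of the quotient map $G\to G/R$, hence a homomorphism. The substance of the lemma is surjectivity, and I will obtain it from the Frattini argument \cite[Theorem~5.2.14]{Robinson} applied inside the full preimage of the normalizer.

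Let $H\le G$ be the preimage of $\N_{G/R}(PR/R)$, so $R\le H$ and $H/R=\N_{G/R}(PR/R)$. By construction $PR/R\trianglelefteq H/R$, hence $PR\trianglelefteq H$. Since $P\in\Syl_p(G)$ and $P\le PR\le G$, the group $P$ is a Sylow $p$-subgroup of $PR$. The Frattini argument applied to $PR\trianglelefteq H$ then gives
\[
H=PR\,\N_H(P).
\]
Because $P\le\N_H(P)$, this becomes $H=R\,\N_H(P)$.

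Now take any $gR\in\N_{G/R}(PR/R)$. Then $g\in H$, so $g=rn$ with $r\in R$ and $n\in\N_H(P)\le\N_G(P)$. Therefore $gR=nR=\varphi(n)$, and $\varphi$ is surjective.

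The only step needing care is identifying the right ambient group for the Frattini argument: one must pass to the preimage $H$ rather than work in $G$ itself, and check that $P$ is Sylow in $PR$. That check is immediate from $P\in\Syl_p(G)$. Everything else is routine.
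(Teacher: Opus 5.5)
Your proof is correct and is essentially the paper's argument. The paper carries out the Frattini argument by hand: $gPg^{-1}$ and $P$ are both Sylow in $PR$, so some $ur\in PR$ with $u\in P$, $r\in R$ conjugates one to the other, and after discarding $u$ one gets $rg\in\N_G(P)$ with $(rg)R=gR$; you instead cite the Frattini argument for $PR\trianglelefteq H$, with $H$ the full preimage, which packages exactly the same Sylow-conjugacy step (and you also make the well-definedness check explicit).
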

\begin{proof}
Let $gR\in\N_{G/R}(PR/R)$. Then $g(PR)g^{-1}=PR$, so $gPg^{-1}$ and $P$ are Sylow $p$-subgroups of $PR$. Hence some element $ur\in PR$, with $u\in P$ and $r\in R$, conjugates $gPg^{-1}$ to $P$. Since $u$ normalizes $P$, also $r$ conjugates $gPg^{-1}$ to $P$. Consequently $rg\in\N_G(P)$ and $(rg)R=gR$, proving surjectivity.
\end{proof}
\begin{lemma}\label{lem:quotient-normalizer-character}
Assume the hypotheses and notation of Lemma~\ref{lem:quotient-character}. For every $z\in\N_{(B,\cdot)/R}((P,\cdot)R/R)$, we have
\begin{equation}\label{eq:normalizer-quotient-character}
\overline\psi_{R,p,q}(z)=\Delta_{(P,\cdot)R/R}(\Ad_z)^m.
\end{equation}
\end{lemma}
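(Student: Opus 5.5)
Write $G=(B,\cdot)$, $\overline G=G/R$ and $\overline P=(P,\cdot)R/R$. First I need to confirm that $(P,\cdot)$ is a Sylow $p$-subgroup of $G$. Since $(P,+)$ is characteristic in $(B,+)$, $P$ is a subbrace, so $(P,\cdot)$ is a subgroup of order $|P|$, and this is the full $p$-part of $|B|$.

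Given $z\in\N_{\overline G}(\overline P)$, Lemma~\ref{lem:normalizer-quotient} gives $h\in\N_G((P,\cdot))$ with $hR=z$. Hence $\overline\psi_{R,p,q}(z)=\chi_{R,p}(h)^m$. It therefore suffices to prove, for every $h\in\N_G((P,\cdot))$,
\[
\chi_{R,p}(h)^m=\Delta_{\overline P}(\Ad_{hR})^m .
\]
Both sides are homomorphisms on $N:=\N_G((P,\cdot))$. The right-hand side is a homomorphism because $hR\mapsto \Ad_{hR}|_{\overline P}$ is, and $\Delta_{\overline P}$ is multiplicative. Both take values in the $q$-part of $\F_p^\times$, since $(p-1)=q^am$. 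So the ratio
\[
\omega(h)=\chi_{R,p}(h)^m\,\Delta_{\overline P}(\Ad_{hR})^{-m}
\]
is a homomorphism from $N$ to an abelian $q$-group, and it kills every element of order prime to $q$. In particular $\omega$ is trivial on any $q'$-element.

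By Lemma~\ref{lem:brace-normalizer}(1),(2), $S:=N\cap(Q,\cdot)$ is a Sylow $q$-subgroup of $N$. On $S$, identity \eqref{eq:general-local-character} of Lemma~\ref{lem:quotient-character} gives $\chi_{R,p}(t)=\Delta_{\overline P}(\Ad_{tR})$, so $\omega(t)=1$ for $t\in S$. Now $N$ is generated by its $q'$-part together with the conjugates of $S$. More precisely, $\omega(N)$ is a $q$-group, so $\omega$ factors through $N/\ker\omega$, and every $q$-element of $N$ is conjugate in $N$ into $S$. Since $\omega$ takes values in an abelian group, it is constant on $N$-conjugacy classes. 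Thus $\omega$ is trivial on all $q$-elements. It is also trivial on all $q'$-elements. Writing any $u\in N$ as a product of commuting $q$- and $q'$-parts then shows $\omega$ is trivial on all of $N$. This proves \eqref{eq:normalizer-quotient-character}.

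The main obstacle is making sure \eqref{eq:general-local-character} applies exactly to elements of $S=N\cap(Q,\cdot)$, which is the hypothesis of Lemma~\ref{lem:quotient-character}. The other point to check is that $\Ad_{hR}$ actually preserves $\overline P$ for every $h\in N$, so that $\Delta_{\overline P}$ is defined on the whole normalizer. This holds because $h$ normalizes $(P,\cdot)$ and $R$ is normal in $G$. Everything else is bookkeeping with characters of $q$-power order.
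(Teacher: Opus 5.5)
Your proof is correct and follows essentially the paper's argument: the ratio of the two characters is a homomorphism into the $q$-part of $\F_p^\times$ that is trivial on the Sylow $q$-subgroup $\N_{(B,\cdot)}((P,\cdot))\cap(Q,\cdot)$ by \eqref{eq:general-local-character}, hence trivial everywhere. The only cosmetic difference is where you conclude. You stay on $\N_{(B,\cdot)}((P,\cdot))$ and finish with Sylow conjugacy plus the decomposition of each element into its $q$- and $q'$-parts. The paper instead pushes that Sylow $q$-subgroup down to $\N_{(B,\cdot)/R}((P,\cdot)R/R)$ via Lemma~\ref{lem:normalizer-quotient}, and notes that the kernel then has index prime to $q$.
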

\begin{proof}
Recall that $p-1=q^a m$, where $\gcd(q,m)=1$. Define
\[
\begin{aligned}
\Phi:\N_{(B,\cdot)/R}((P,\cdot)R/R)
&\longrightarrow\{u\in\F_p^\times:u^{q^a}=1\},\\
z&\longmapsto
\overline\psi_{R,p,q}(z)\,
\Delta_{(P,\cdot)R/R}(\Ad_z)^{-m}.
\end{aligned}
\]
Both factors are homomorphisms into the indicated abelian $q$-group, so $\Phi$ is a homomorphism. By Lemma~\ref{lem:normalizer-quotient}, the quotient map induces a surjection
\[
\N_{(B,\cdot)}((P,\cdot))
\longrightarrow
\N_{(B,\cdot)/R}((P,\cdot)R/R),
\qquad t\longmapsto tR.
\]
By Lemma~\ref{lem:brace-normalizer}\textup{(2)}, the subgroup $\N_{(B,\cdot)}((P,\cdot))\cap(Q,\cdot)$ is a Sylow $q$-subgroup of $\N_{(B,\cdot)}((P,\cdot))$. Its image is therefore a Sylow $q$-subgroup of the domain of $\Phi$. For every $t\in\N_{(B,\cdot)}((P,\cdot))\cap(Q,\cdot)$, Lemma~\ref{lem:quotient-character} gives
\[
\Phi(tR)=
\chi_{R,p}(t)^m\,
\Delta_{(P,\cdot)R/R}(\Ad_{tR})^{-m}=1.
\]
Thus $\ker\Phi$ contains a Sylow $q$-subgroup of its domain, so its index is prime to $q$. This index equals $|\operatorname{im}\Phi|$, which is a power of $q$. Hence $\Phi$ is trivial, proving \eqref{eq:normalizer-quotient-character}.
\end{proof}

\section{Proof of the main obstruction}\label{sec:obstruction}

The following lemma provides the element and the determinant
calculation needed to complete the proof of
Proposition~\ref{prop:obstruction}.

\begin{lemma}\label{lem:wreath-determinant}
Let $r\geq1$, $T\leq S_r$, and let $W=\GL_3(2)^r\rtimes T$, with $T$ permuting the factors. For every $P\in\Syl_7(W)$, there exists an element $z\in\GL_3(2)^r\cap\N_W(P)$ of order $3$ such that $\Delta_P(\Ad_z)=2$ in $\F_7^\times$.
\end{lemma}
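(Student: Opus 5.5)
The plan is to reduce to one convenient Sylow $7$-subgroup, write down an explicit element $z$ of order $3$ that commutes with the permutation part of that subgroup, and then compute $\Delta_P(\Ad_z)$ with Lemma~\ref{lem:Delta-series}\textup{(2)}.

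\textbf{Reduction to one Sylow subgroup.} Put $N=\GL_3(2)^r\trianglelefteq W$. Since all Sylow $7$-subgroups of $W$ are conjugate, it suffices to treat one of them. Indeed, suppose $P$ and $z$ work and $w\in W$. Then $wzw^{-1}$ lies in $N$ (as $N$ is normal) and normalizes $wPw^{-1}$. By Lemma~\ref{lem:Delta-series}\textup{(1)}, applied to the isomorphism $\Ad_w|_P:P\to wPw^{-1}$, we get $\Delta_{wPw^{-1}}(\Ad_{wzw^{-1}})=\Delta_P(\Ad_z)$.

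\textbf{Choice of $P$ and of $z$.} In $\GL_3(2)$, of order $168$, fix an element $c$ of order $7$. Its normalizer is the Frobenius group $\langle c\rangle\rtimes\langle d\rangle$ of order $21$, with $dcd^{-1}=c^{2}$; if instead $dcd^{-1}=c^4$, replace $d$ by $d^{-1}$. Let $I=\langle c\rangle^r\le N$, which is elementary abelian of order $7^r$. Since $T$ permutes coordinates, $T$ normalizes $I$. Take $P_T\in\Syl_7(T)$ and put $P=I\rtimes P_T$. Its order is $7^r|T|_7=|W|_7$, so $P\in\Syl_7(W)$. Also $P\cap N=I$, so $I\trianglelefteq P$.

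Let $\mathcal O\subseteq\{1,\dots,r\}$ be one $P_T$-orbit. Define $z=(z_1,\dots,z_r)\in N$ by $z_i=d$ for $i\in\mathcal O$ and $z_i=1$ otherwise. Clearly $z$ has order $3$. For $t\in P_T$ inducing the permutation $\sigma$, the element $t z t^{-1}$ is $z$ with coordinates permuted by $\sigma$. This equals $z$ because the support $\mathcal O$ is $\sigma$-stable and the nontrivial entries are all equal to $d$. Hence $z$ centralizes $P_T$. Since $d$ normalizes $\langle c\rangle$ coordinatewise, $z$ also normalizes $I$. Therefore $z\in N\cap\N_W(P)$.

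The step I expect to need the most care is this choice of support. A naive choice such as $z=(d,\dots,d)$ gives determinant $2^r$, and this equals $1$ when $3\mid r$. Restricting the support to a single $P_T$-orbit fixes this. Any such orbit has size $7^k\equiv1\pmod 3$. One must also check that $z$ really normalizes $P$, and not merely modulo $I$. This is why the entries are taken constant on a $P_T$-orbit: a nonzero difference of exponents $d^{e_{\sigma(i)}-e_i}$ would not lie in $\langle c\rangle$.

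\textbf{The determinant computation.} Apply Lemma~\ref{lem:Delta-series}\textup{(2)} with the $\Ad_z$-invariant normal subgroup $I$:
\[
\Delta_P(\Ad_z)=\Delta_I(\Ad_z|_I)\,\Delta_{P/I}(\overline{\Ad_z}).
\]
\begin{enumerate}
\item Since $z$ centralizes $P_T$ and $P/I\simeq P_T$, the induced automorphism on $P/I$ is the identity. Hence the second factor is $1$.
\item $I$ is elementary abelian, so $\Phi(I)=1$ and $\Delta_I$ is the ordinary determinant on $I\simeq\F_7^{\,r}$.
\item On this space $z$ acts diagonally: by $2$ on the coordinates in $\mathcal O$, and by $1$ elsewhere.
\end{enumerate}
Therefore
\[
\Delta_P(\Ad_z)=2^{|\mathcal O|}=2^{7^k}=2\quad\text{in }\F_7^\times,
\]
since $2^3=1$ in $\F_7^\times$ and $7^k\equiv1\pmod 3$.
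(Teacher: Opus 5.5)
Your proposal is correct and follows the paper's proof step for step. It uses the same Sylow subgroup $\langle c\rangle^r\rtimes P_T$, the same order-$3$ element supported on a single $P_T$-orbit of size $7^k$, the same splitting via Lemma~\ref{lem:Delta-series}\textup{(2)} with trivial contribution from $P/I$, and the same conjugation argument for an arbitrary Sylow subgroup. The only difference is cosmetic: you quote the Frobenius structure of $\N_{\GL_3(2)}(\langle c\rangle)$ as a known fact, whereas the paper realizes it explicitly through multiplication by a generator of $\F_8^\times$ and the Frobenius map $x\mapsto x^2$.
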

\begin{proof}
We first prove the lemma for a particular Sylow $7$-subgroup. Regard $\F_8$ as a $3$-dimensional vector space over $\F_2$. Choose a generator $\xi$ of $\F_8^\times$ and define $u:\F_8\longrightarrow\F_8$ by $u(x)=\xi x$ for every $x\in\F_8$.
Then $u\in\GL_3(2)$ and $|u|=7$. Since $|\GL_3(2)|=168$, the subgroup $U=\langle u\rangle$ is a Sylow $7$-subgroup of $\GL_3(2)$. Choose $V\in\Syl_7(T)$ and put $P_0=U^r\rtimes V$. This subgroup is well defined because $T$ permutes the coordinates of $U^r$. Moreover,
\[
|P_0|=7^r|V|=7^r|T|_7=|W|_7,
\]
so $P_0\in\Syl_7(W)$.
 Consider the $\F_2$-linear Frobenius map $s(x)=x^2$. Since $s^3(x)=x^8=x$ and $s\neq1$, it has order $3$, with $s^{-1}(x)=x^4$. For every $j\in\mathbb Z$
and $x\in\F_8$ we have
\[
\begin{aligned}
(su^js^{-1})(x)
&=s(\xi^j x^4)=\xi^{2j}x^8=\xi^{2j}x=u^{2j}(x).
\end{aligned}
\]
Thus $s$ normalizes $U=\langle u\rangle$ and
$\Ad_s(u^j)=u^{2j}$. Since $U$ is cyclic of order $7$, we have $\Phi(U)=1$.
Hence the only nontrivial Frattini factor is $U/1$.
Consider the isomorphism
\[
\theta:U\longrightarrow(\F_7,+),
\qquad \theta(u^j)=[j]_7,
\]
and the automorphism
\[
\mu_2:(\F_7,+)\longrightarrow(\F_7,+),
\qquad \mu_2(a)=2a.
\]
For every $u^j\in U$, we have
\[
\theta(\Ad_s(u^j))
=\theta(u^{2j})
=[2j]_7
=\mu_2([j]_7)
=\mu_2(\theta(u^j)).
\]
Hence $\theta\Ad_s=\mu_2\theta$, or equivalently, $\theta\Ad_s\theta^{-1}=\mu_2.$
Lemma~\ref{lem:Delta-series}\textup{(1)}, applied to
the isomorphism $\theta$, therefore gives
\[
\Delta_U(\Ad_s)
=\Delta_{(\F_7,+)}(\theta\Ad_s\theta^{-1})
=\Delta_{(\F_7,+)}(\mu_2).
\]
Since $\Phi((\F_7,+))=\{0\}$, the definition of $\Delta$
now yields
\[
\Delta_{(\F_7,+)}(\mu_2)
=\det_{\F_7}(\mu_2)
=\det_{\F_7}(2)
=2.
\]
Thus $\Delta_U(\Ad_s)=2$. Consider the natural permutation action of
$V\le T\le S_r$ on $\{1,\ldots,r\}$, given by
$v\cdot i=v(i)$.
Choose an orbit $\mathcal O$ of this action.
By the orbit--stabilizer theorem $|\mathcal O|=[V:\operatorname{Stab}_V(i)]=7^k$
for any $i\in\mathcal O$ and some $k\ge0$,
since $V$ is a $7$-group. Define
\[
z_0=(z_1,\ldots,z_r),
\qquad
z_i=
\begin{cases}
s,&i\in\mathcal O,\\
1,&i\notin\mathcal O.
\end{cases}
\]
Then $z_0$ has order $3$ and normalizes $U^r$. For every $v\in V$ we have
\[
vz_0v^{-1}
=(z_{v^{-1}(1)},\ldots,z_{v^{-1}(r)})
=(z_1,\ldots,z_r)=z_0,
\]
because $z_i$ is constant on each $V$-orbit. Thus $z_0$ centralizes $V$, and consequently $z_0\in\GL_3(2)^r\cap\N_W(P_0).$
For every $(u^{a_1},\ldots,u^{a_r})\in U^r$,
conjugation is computed coordinatewise:
\[
\begin{aligned}
\Ad_{z_0}(u^{a_1},\ldots,u^{a_r})
&=(z_1u^{a_1}z_1^{-1},\ldots,
   z_ru^{a_r}z_r^{-1})\\
&=(u^{d_1a_1},\ldots,u^{d_ra_r}),
\end{aligned}
\]
where
\[
d_i=
\begin{cases}
2,&i\in\mathcal O,\\
1,&i\notin\mathcal O.
\end{cases}
\]
Here we have used $su^as^{-1}=u^{2a}$.
Consider the group isomorphism
\[
\theta_r:U^r\longrightarrow(\F_7^r,+),
\qquad
(u^{a_1},\ldots,u^{a_r})
\longmapsto([a_1]_7,\ldots,[a_r]_7),
\]
and the linear automorphism
\[
\mu:\F_7^r\longrightarrow\F_7^r,
\qquad
\mu(a_1,\ldots,a_r)=(d_1a_1,\ldots,d_ra_r).
\]
For every $(u^{a_1},\ldots,u^{a_r})\in U^r$, we have
\[
\begin{aligned}
\theta_r\bigl(\Ad_{z_0}(u^{a_1},\ldots,u^{a_r})\bigr)
&=\theta_r(u^{d_1a_1},\ldots,u^{d_ra_r})\\
&=([d_1a_1]_7,\ldots,[d_ra_r]_7)\\
&=\mu([a_1]_7,\ldots,[a_r]_7)\\
&=\mu\bigl(\theta_r(u^{a_1},\ldots,u^{a_r})\bigr).
\end{aligned}
\]
Thus $\theta_r(\Ad_{z_0}|_{U^r})\theta_r^{-1}=\mu.$
By Lemma~\ref{lem:Delta-series}\textup{(1)},
\[
\begin{aligned}
\Delta_{U^r}(\Ad_{z_0}|_{U^r})
&=\Delta_{(\F_7^r,+)}
  \bigl(\theta_r(\Ad_{z_0}|_{U^r})\theta_r^{-1}\bigr)=\Delta_{(\F_7^r,+)}(\mu).
\end{aligned}
\]
Since $(\F_7^r,+)$ is elementary abelian, its Frattini
subgroup is $\{0\}$, so
\[
\Delta_{(\F_7^r,+)}(\mu)=\det_{\F_7}(\mu).
\]
The matrix of $\mu$ in the standard basis is
\[
\begin{pmatrix}
d_1& &0\\
&\ddots&\\
0& &d_r
\end{pmatrix}.
\]
Consequently,
\[
\begin{aligned}
\Delta_{U^r}(\Ad_{z_0}|_{U^r})
&=\det_{\F_7}\bigl(\operatorname{diag}(d_1,\ldots,d_r)\bigr)=\prod_{i=1}^r d_i=\left(\prod_{i\in\mathcal O}2\right)
  \left(\prod_{i\notin\mathcal O}1\right)=2^{|\mathcal O|}.
\end{aligned}
\]

We next compute the automorphism induced by $\Ad_{z_0}$
on $P_0/U^r$.
Every element of $P_0=U^r\rtimes V$ has the form $xv$,
with $x\in U^r$ and $v\in V$.
Since $z_0$ centralizes $V$ and normalizes $U^r$,
\[
z_0(xv)z_0^{-1}
=(z_0xz_0^{-1})(z_0vz_0^{-1})
=(z_0xz_0^{-1})v,
\qquad z_0xz_0^{-1}\in U^r.
\]
Therefore
\[
\overline{\Ad}_{z_0}(xvU^r)
=z_0(xv)z_0^{-1}U^r
=vU^r
=xvU^r.
\]
Hence
\[
\overline{\Ad}_{z_0}=\mathrm{id}_{P_0/U^r},
\qquad
\Delta_{P_0/U^r}(\overline{\Ad}_{z_0})=1.
\]
Applying Lemma~\ref{lem:Delta-series}\textup{(2)} to
the $\Ad_{z_0}$-invariant normal subgroup
$U^r\trianglelefteq P_0$, we obtain
\[
\begin{aligned}
\Delta_{P_0}(\Ad_{z_0})
&=\Delta_{U^r}(\Ad_{z_0}|_{U^r})\,
  \Delta_{P_0/U^r}(\overline{\Ad}_{z_0})=2^{|\mathcal O|}\cdot1=2^{7^k}.
\end{aligned}
\]
Finally, $7^k\equiv1\pmod3$ and $2^3=1$ in
$\F_7^\times$, so $2^{7^k}
=2(2^3)^{(7^k-1)/3}
=2.$
Thus $\Delta_{P_0}(\Ad_{z_0})=2$, as required. Finally, let $P\in\Syl_7(W)$ be the prescribed subgroup. Choose $w\in W$ such that $P=wP_0w^{-1}$ and put $z=wz_0w^{-1}$. Since $\GL_3(2)^r\trianglelefteq W$, we have $z\in\GL_3(2)^r$ and $|z|=3$. Also,
\[
zPz^{-1}
=wz_0P_0z_0^{-1}w^{-1}
=wP_0w^{-1}=P.
\]
By Lemma~\ref{lem:Delta-series}\textup{(1)},
\[
\Delta_P(\Ad_z)=\Delta_{P_0}(\Ad_{z_0})=2,
\]
as required.
\end{proof}

We are now ready to prove Proposition~\ref{prop:obstruction}.

\begin{proof}[Proof of Proposition~\ref{prop:obstruction}]
Suppose that such a soluble normal subgroup $R$ exists, and identify $(B,\cdot)/R$ with $\GL_3(2)^r\rtimes T$. Let $(P,+)$ and $(Q,+)$ be the normal additive Sylow $7$- and $3$-subgroups, respectively. Then $(P,\cdot)R/R$ is a Sylow $7$-subgroup of $(B,\cdot)/R$.
Apply Lemma~\ref{lem:quotient-character} with $(p,q)=(7,3)$. Since $7-1=3\cdot2$, the character $\psi_{R,7,3}=\chi_{R,7}^{\,2}$ is trivial on $R$ and induces $\overline\psi_{R,7,3}:(B,\cdot)/R\to\{1,2,4\}$. By Lemma~\ref{lem:wreath-determinant}, there is an element $z\in\GL_3(2)^r\cap\N_{(B,\cdot)/R}((P,\cdot)R/R)$
with $\Delta_{(P,\cdot)R/R}(\Ad_z)=2$. Since $\GL_3(2)^r$ is perfect, $z$ lies in the derived subgroup of $(B,\cdot)/R$ and $\overline\psi_{R,7,3}(z)=1$. By Lemma~\ref{lem:quotient-normalizer-character}, however,
\[
1=\overline\psi_{R,7,3}(z)=\Delta_{(P,\cdot)R/R}(\Ad_z)^2=2^2=4
\]
in $\F_7^\times$, a contradiction.
\end{proof}

\section*{Declaration on the use of artificial intelligence}

Theorem \ref{thm:A} originated from the author's attempt to
prove that every finite skew brace with supersoluble
additive group has soluble multiplicative group.
The proof emerged from a continuous and extensive
interaction with ChatGPT (Sol 6.1), lasting several
months, during which the author guided the mathematical
investigation through questions, discussions, and
successive refinements of the arguments.
A crucial turning point was the idea of using the
iterated Frattini series of finite $p$-groups to
construct determinant characters, which ultimately
provided the key tool for establishing the main result. The initial proof generated by ChatGPT was incomprehensible in its original form and could not be presented as a readable mathematical argument without substantial reconstruction. The author subsequently undertook an extensive effort to understand, reconstruct, reorganize, and rewrite the arguments from the ground up, with the aim of making every step of the reasoning rigorous, transparent, and accessible to the mathematical community. All necessary mathematical verifications were carried out by the author, who takes full responsibility for the correctness of the results and the final version of the manuscript.

\end{document}